\newtheorem{thm}{Theorem}[section]
\newtheorem{cor}[thm]{Corollary}
\newtheorem{lem}[thm]{Lemma}
\newtheorem{prop}[thm]{Proposition}
\theoremstyle{definition}
\newtheorem{defn}[thm]{Definition}
\theoremstyle{remark}
\newtheorem{example}[thm]{Example}
\numberwithin{equation}{section}
\begin{document}

\title[Conditions for recurrence and transience]{Conditions for recurrence and transience for one family of random walks}%
\author{Vyacheslav M. Abramov}%
\address{24 Sagan Drive, Cranbourne North, Vic-3977, Australia}%

\email{vabramov126@gmail.com}%

\subjclass{60G50, 60J80, 60C05, 60K25, 41A80}%
\keywords{Two-dimensional random walk, recurrence and transience, network of state-dependent queues, birth-and-death process, asymptotic analysis, measure transform}
\begin{abstract}
A parametric family of two-dimensional random walks $\mathbf{S}_t(\mathfrak{a})$ $=\big(S_t^{(1)}(\mathfrak{a}), S_t^{(2)}(\mathfrak{a})\big)$ in the main quarter plane, where $\mathfrak{a}$ is an infinite-dimensional vector-valued parameter, is studied.
The components $S_t^{(1)}(\frak{a})$ and $S_t^{(2)}(\mathfrak{a})$ are assumed to be correlated in the specified way that is defined exactly in the paper.
 We derive the conditions on $\frak{a}$, under which a random walk $\mathbf{S}_t(\mathfrak{a})$ is recurrent.

\end{abstract}
\maketitle

\section{Introduction and formulation of the main results}\label{S1}

\subsection{Definition of $\psi$-random walk}
The paper studies special classes of random walks in a quarter plane. The theory of random walks in a quarter plane is a well-established area having numerous applications (e.g. \cite{AFM, FIM, FMM, Raschel}). In the sequel, the random walks in the main quarter plane will be called \textit{reflected} random walks. Sometimes the word \textit{`reflected'} will be omitted.

A parametric family of random walks $\mathbf{S}_t(\mathfrak{a})=\big(S_t^{(1)}(\mathfrak{a}), S_t^{(2)}(\mathfrak{a})\big)$ studied in the present paper is a set of two-dimensional reflected random walks
depending on parameter $\mathfrak{a}\in\mathcal{A}$. It will be further denoted by $\{\mathbf{S}_t,\mathcal{A}\}$. The elements $\mathfrak{a}\in\mathcal{A}$ are infinite sequences of real values, and a special element $\mathfrak{o}\in\mathcal{A}$, which is the infinite sequence of zeros, is associated with the simple reflected two-dimensional random walk denoted by $\mathbf{S}_t(\mathfrak{o})$.

A random walk $\mathbf{S}_t(\mathfrak{a})$ starting at zero is recursively defined as follows. Let $\mathbf{e}_t(\mathfrak{a})=\big(e_t^{(1)}(\mathfrak{a}), e_t^{(2)}(\mathfrak{a})\big)$ be the vector taking the values $\{\pm\mathbf{1}_i, i=1,2\}$ with probabilities depending on $\mathfrak{a}\in\mathcal{A}$ and the state of the random walk at time $t$ ($t=0,1,\ldots$), where the vector $\mathbf{1}_i$ is the two-dimensional vector, the $i$th component of which is 1 and the remaining component is 0. (The further details about the vector $\mathbf{e}_t(\mathfrak{a})$ are given later.) Then,
\begin{eqnarray}
  \mathbf{S}_0(\mathfrak{a}) &=& \mathbf{0},\label{eq.1.24} \\
  \mathbf{S}_t(\mathfrak{a}) &=& \mathbf{S}_{t-1}(\mathfrak{a})+\mathbf{r}_t(\mathfrak{a}),\label{eq.1.25}
\end{eqnarray}
where $\mathbf{0}=(0,0)$,
$$
\mathbf{r}_t(\mathfrak{a})=\begin{cases}\mathbf{e}_t(\mathfrak{a}), &\text{if} \ {S}_{t-1}^{(i)}(\mathfrak{a})+e_t^{(i)}(\mathfrak{a})\geq0 \ \text{for both} \ i=1,2,\\
-\mathbf{e}_t(\mathfrak{a}), &\text{if} \ {S}_{t-1}^{(i)}(\mathfrak{a})+e_t^{(i)}(\mathfrak{a})=-1 \ \text{for one of} \ i=1,2.
\end{cases}
$$

Non-homogeneous random walks and their classification have been studied in book \cite{M_et_all}, where Lyapunov function methods for near-critical stochastic systems have been developed. Another new approach for classification of random walks through the concepts of conservative and semi-conservative random walks has been provided in \cite{A}.
In the present paper, we develop the methods \cite{A}, and the main contribution of the present paper is a closed form explicit asymptotic formula for the
parameter that enables us to classify whether a random walk $\mathbf{S}_t(\mathfrak{a})$ is transient or recurrent for a
quite general family of so-called \textit{nearest-neighborhood} random walks $\{\mathbf{S}_t, \mathcal{A}\}$.

Unfortunately, the families of conservative and semi-conservative random walks (in the sense of \cite{A}) are relatively narrow and seems cannot be used for the models considered in this paper. Therefore, we provide another classification of random walks that is closely related to one given in \cite{A}.
For any vector $\mathbf{n}=(n^{(1)}, n^{(2)},\ldots, n^{(d)})$, $\|\mathbf{n}\|$ denotes its $l_1$-norm, i.e.
$\|\mathbf{n}\|=|n^{(1)}|+|n^{(2)}|+\ldots+|n^{(d)}|$. Since in our case, the vectors are assumed to be two-dimensional with nonnegative components, the notation reduces to $\|\mathbf{n}\|=n^{(1)}+n^{(2)}$.

For the random walks defined by \eqref{eq.1.24}, \eqref{eq.1.25} denote
\begin{eqnarray}
P_t(\mathfrak{a}; n+1|n)&=&\mathsf{P}\{\|\mathbf{S}_{t+1}(\mathfrak{a})\|=n+1~\big|~\|\mathbf{S}_{t}(\mathfrak{a})\|=n\},\label{eq.1.26}\\
P_t(\mathfrak{a}; n-1|n)&=&\mathsf{P}\{\|\mathbf{S}_{t+1}(\mathfrak{a})\|=n-1~\big|~\|\mathbf{S}_{t}(\mathfrak{a})\|=n\}.\label{eq.1.27}
\end{eqnarray}
The condition $\{\|\mathbf{S}_{t}(\mathfrak{a})\|=n\}$ is not empty if and only if $t-n$ is even.
If $\{\|\mathbf{S}_{t}(\mathfrak{a})\|=n\}=\emptyset$, then the right-hand sides of \eqref{eq.1.26} and \eqref{eq.1.27} have the form of $\mathsf{P}\{\emptyset|\emptyset\}$, and relations \eqref{eq.1.26} and \eqref{eq.1.27} are undefined.

To extend \eqref{eq.1.26} and \eqref{eq.1.27} for all nonnegative combinations $t$ and $n$, the following setting is needed: if $\{\|\mathbf{S}_{t}(\mathfrak{a})\|=n\}=\emptyset$, we are to set $P_t(\mathfrak{a}; n+1|n)=P_{t-1}(\mathfrak{a}; n+1|n)$ and $P_t(\mathfrak{a}; n-1|n)=P_{t-1}(\mathfrak{a}; n-1|n)$, $t\geq1$.
Then the  random walk given by \eqref{eq.1.24}, \eqref{eq.1.25} can be redefined as follows. Instead of \eqref{eq.1.24} and \eqref{eq.1.25} one can consider the infinite series of identical
random walks given on the same probability space, assuming that the $N$th one
of them starts at time $t=-N$, i.e. $\mathbf{S}_{-N}^{(N)}(\mathfrak{a})=\mathbf{0}$, where the superscript
$(N)$ indicates the series number. This construction enables us to extend the
process to the whole time axis. For further
details see \cite{Kolmogorov}. Then, instead of \eqref{eq.1.24} and \eqref{eq.1.25} one can consider the process
defined recursively by
\begin{equation}\label{eq.1.2}
\mathbf{S}_t(\mathfrak{a}) = \mathbf{S}_{t-1}(\mathfrak{a})+\mathbf{r}_t(\mathfrak{a}),
\end{equation}
for all $t=\ldots,-1,0,1,\ldots$, and the limit
$$
\lim_{n\to\infty}\left(\frac{P_0(\mathfrak{a}; n+1|n)}{P_0(\mathfrak{a}; n-1|n)}\right)^n
$$
can be assumed to exist.

\begin{defn}\label{D1}
A family of random walks $\{\mathbf{S}_t, \mathcal{A}\}$, $t=\ldots,-1,0,1\ldots$, is said to have index $\psi$, if for all $\mathfrak{a}\in\mathcal{A}$
\begin{equation}\label{eq.1.3.2}
\lim_{n\to\infty}\left(\frac{P_0(\mathfrak{a}; n+1|n)}{P_0(\mathfrak{a}; n-1|n)}\right)^n
=\mathrm{e}^\psi.
\end{equation}
For simplicity, a family of random walks having index $\psi$ will be called $\psi$-random walks.
\end{defn}

Following this definition, the family of $d$-dimensional symmetric random walks \cite{A} is on the one hand $(\mathcal{A}, d)$-conservative and, on the other hand, represents $(d-1)$-random walks (see Theorem 2.1 and relation (4.6) of Lemma 4.2 in \cite{A}).

\subsection{Description of the model and formulation of main results}\label{S1.2}

In the present paper, we study the following new family of two-dimensional random walks. The set $\mathcal{A}$ is the set of infinite sequences, the elements $\mathfrak{a}$ of which are specified as indexed sequences $\{\alpha_{\mathbf{n}}\}$, where $\mathbf{n}=\{n^{(1)},n^{(2)}\}$ is the set of ordered pairs of nonnegative integers, $n^{(1)}\leq n^{(2)}$. So, $\alpha_{\mathbf{n}}=\alpha_{(n^{(1)},n^{(2)})}$, and in the rest of the paper each of the notation $\alpha_{\mathbf{n}}$ or $\alpha_{(n^{(1)},n^{(2)})}$ can be used interchangeably.

For the following specification of a random walk $\mathbf{S}_t(\mathfrak{a})=\big(S_t^{(1)}(\mathfrak{a}), S_t^{(2)}(\mathfrak{a})\big)$ denote:
\begin{eqnarray*}
  I_t(\mathfrak{a}) &=& \min\left\{S_t^{(1)}(\mathfrak{a}), S_t^{(2)}(\mathfrak{a})\right\}, \\
  J_t(\mathfrak{a}) &=& \max\left\{S_t^{(1)}(\mathfrak{a}), S_t^{(2)}(\mathfrak{a})\right\}, \\
  N_t(\mathfrak{a}) &=& \|\mathbf{S}_t(\mathfrak{a})\|=I_t(\mathfrak{a})+J_t(\mathfrak{a}).
\end{eqnarray*}
For the sake of simplicity, we use the notation $J_t=J_t(\mathfrak{a})$, $I_t=I_t(\mathfrak{a})$ and $N_t=N_t(\mathfrak{a})$ omitting the argument $\mathfrak{a}$ in all places where it is possible to do without loss of meaning. Then, the definition of the random walk follows from the following conditions:

\begin{eqnarray}
  \mathsf{P}\{J_{t+1}=J_t+1~|~I_t>0, I_t<J_t\} &=& \frac{1}{4}+\frac{\alpha_{(I_t,J_t)}}{N_t},\label{eq.1.4} \\
  \mathsf{P}\{J_{t+1}=J_t-1~|~I_t>0, I_t<J_t\} &=& \frac{1}{4}-\frac{\alpha_{(I_t,J_t)}}{N_t},\label{eq.1.5} \\
  \mathsf{P}\{I_{t+1}=I_t+1~|~I_t>0, I_t<J_t\} &=& \frac{1}{4}-\frac{\alpha_{(I_t,J_t)}}{N_t},\label{eq.1.6} \\
  \mathsf{P}\{I_{t+1}=I_t-1~|~I_t>0, I_t<J_t\} &=& \frac{1}{4}+\frac{\alpha_{(I_t,J_t)}}{N_t}.\label{eq.1.7}
\end{eqnarray}
\begin{equation}\label{eq.1.8}
\begin{aligned}
  &\mathsf{P}\{J_{t+1}=J_t+1~|~I_t>0, I_t=J_t\}\\
  &=\mathsf{P}\{I_{t+1}=I_t-1~|~I_t>0, I_t=J_t\}\\
  &=\frac{1}{2}.
\end{aligned}
\end{equation}
%


\begin{eqnarray}
    \mathsf{P}\{J_{t+1}=J_t+1~|~I_t=0, J_t>0\} &=& \frac{1}{4},\label{eq.1.9} \\
    \mathsf{P}\{J_{t+1}=J_t-1~|~I_t=0, J_t>0\} &=& \frac{1}{4},\label{eq.1.10}\\
    \mathsf{P}\{I_{t+1}=1~|~I_t=0, J_t>0\}&=&\frac{1}{2}.\label{eq.1.11}
\end{eqnarray}

\begin{eqnarray}
\mathsf{P}\{J_{t+1}=1~|~I_t=0, J_t=0\}&=&1,\label{eq.1.12}\\
\mathsf{P}\{I_{t+1}=0~|~I_t=0, J_t=0\}&=&1.\label{eq.1.13}
\end{eqnarray}
To be in an agreement with \eqref{eq.1.4} -- \eqref{eq.1.7}, the values $\alpha_{\mathbf{n}}$ are assumed to obey the inequality
\begin{equation}\label{eq.1.20}
|\alpha_{\mathbf{n}}|<\min\left\{C,\frac{1}{4}\|\mathbf{n}\|\right\},
\end{equation}
where $C$ is some positive constant. Note, that the quantities $\alpha_{(0,n)}$ and $\alpha_{(i,i)}$ for $n=0,1,\ldots$ and $i=0,1,\ldots$ are not used in the definition of the family of random walks and, hence, are equated to zero.

\begin{figure}
\includegraphics[width=10cm, height=14cm]{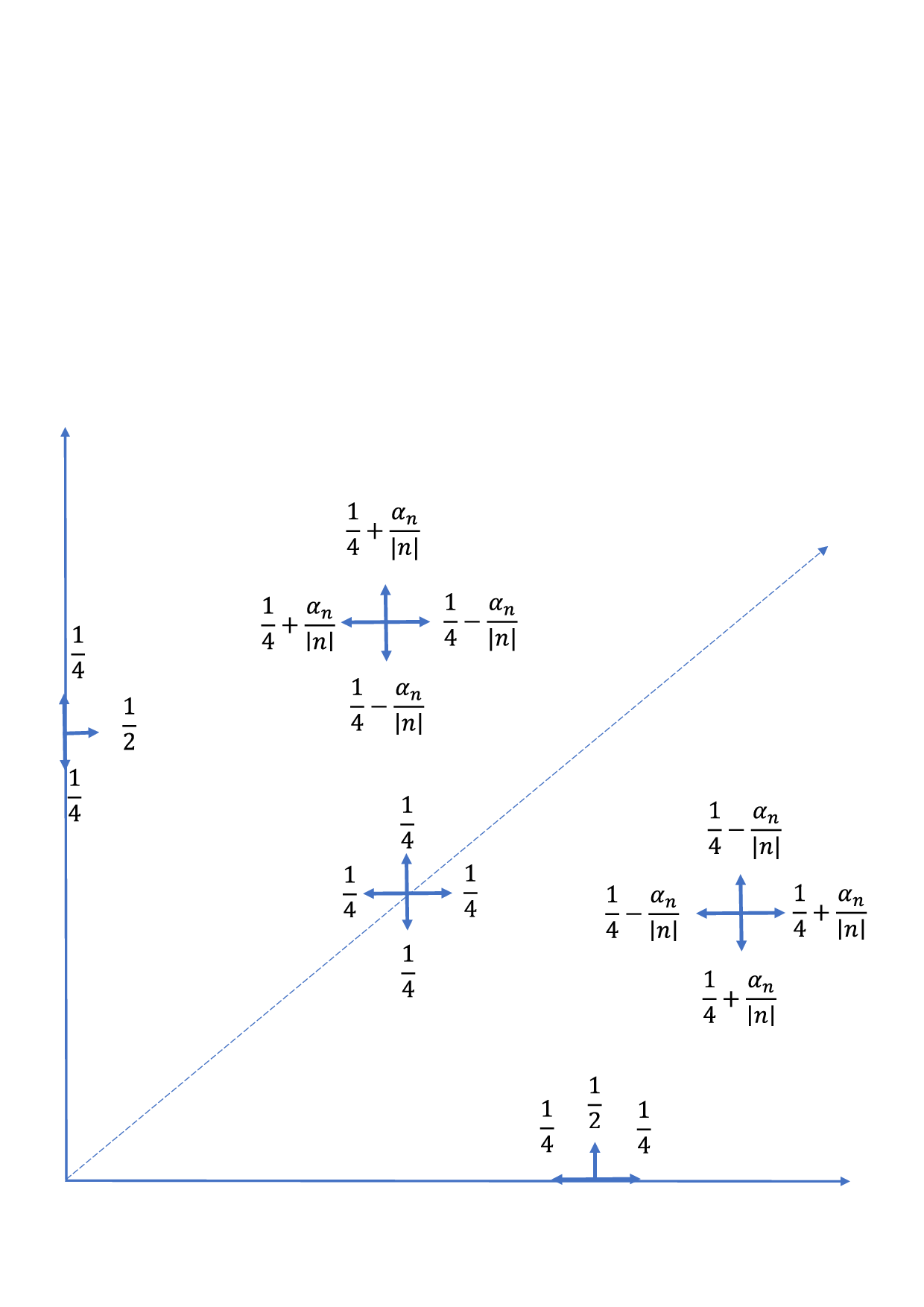}
\caption{Transition probabilities for a random walk in the main quarter plane.}
\end{figure}

Transition probabilities of a typical reflected random walk are illustrated on Figure 1. The $l_1$-norm of the vector $\mathbf{n}$ there is shown as $|n|$, and $\alpha_{\mathbf{n}}$ as $\alpha_n$. As it is seen from Figure 1, the transition probabilities are reflected symmetrically by the bisectrix of the main quarter plane. Because of this symmetry, the model can be further reduced to the random walk in a wedge (see Figure 2).

\begin{figure}
\includegraphics[width=10cm, height=14cm]{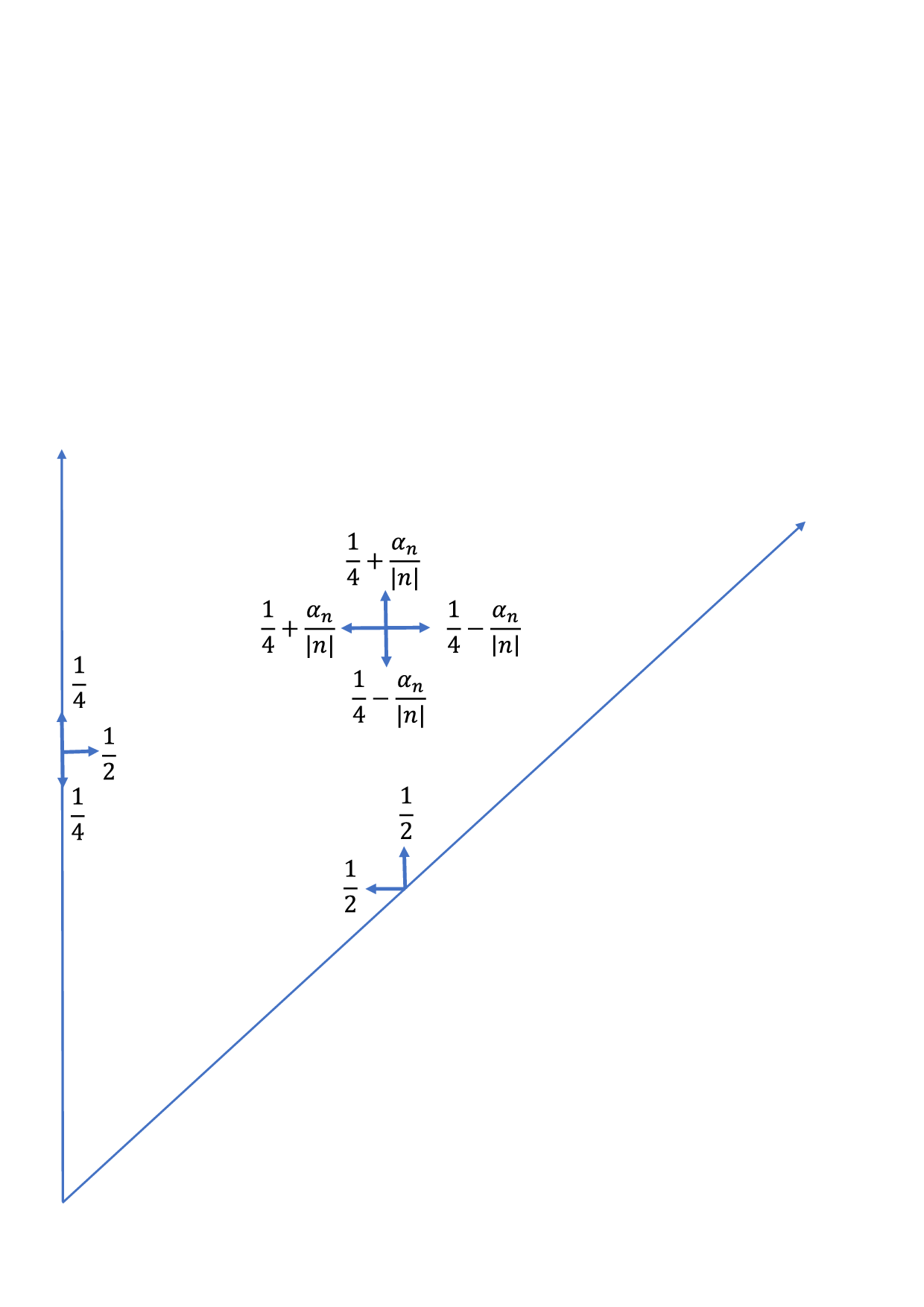}
\caption{Transition probabilities for a random walk in a wedge.}
\end{figure}

Assume that for all $\mathbf{n}$ with $n^{(1)}\geq2$ satisfying the inequality $\|\mathbf{n}\|>n_0$, where $n_0$ is some large value,
\begin{equation}\label{eq.1.21}
  |\alpha_{\mathbf{n}+\mathbf{1}}-\alpha_{\mathbf{n}}|\leq\gamma|\alpha_{\mathbf{n}}-\alpha_{\mathbf{n}-\mathbf{1}}|,
\end{equation}
where $\mathbf{1}=(1,1)$ and $0<\gamma<1$.

Assumption \eqref{eq.1.21} is technically used in Section \ref{S4.1}. Together with \eqref{eq.1.20} it implies the convergence of the coefficients $\alpha_{(n^{(1)},n^{(2)})}$ as $\|\mathbf{n}\|\to\infty$, in which the differences $n^{(2)}-n^{(1)}$ are kept fixed, to the limits with geometric rate. Namely, we denote
$$
\alpha^{*}_{n^{(2)}-n^{(1)}}=\lim_{k\to\infty}\alpha_{(n^{(1)}+k,n^{(2)}+k)}.
$$

\smallskip
In addition to \eqref{eq.1.20} and \eqref{eq.1.21}, assume that there exists the limit
\begin{equation}\label{eq.1.22}
\kappa(\mathfrak{a})=\lim_{k\to\infty}\frac{1}{k}\sum_{j=1}^{k-1}\prod_{i=1}^{j}\left(1+\frac{2\alpha^*_{2k-2i}}{k}+\frac{4\alpha^*_{2k-2i-1}}{k}+\frac{2\alpha^*_{2k-2i-2}}{k}\right).
\end{equation}
Notice that if the sequence of products
\begin{equation}\label{eq.1.23}
\prod_{i=1}^{k-1}\left(1+\frac{2\alpha^*_{2k-2i}}{k}+\frac{4\alpha^*_{2k-2i-1}}{k}+\frac{2\alpha^*_{2k-2i-2}}{k}\right)
\end{equation}
converges as $k\to\infty$, then $\kappa(\mathfrak{a})$ can be represented as
\begin{equation}\label{eq.1.50}
\kappa(\mathfrak{a})=\lim_{k\to\infty}\prod_{i=1}^{k-1}\left(1+\frac{2\alpha^*_{2k-2i}}{k}+\frac{4\alpha^*_{2k-2i-1}}{k}+\frac{2\alpha^*_{2k-2i-2}}{k}\right).
\end{equation}

The main result of the paper is the following theorem.
\begin{thm}\label{T1}
  If $\kappa(\mathfrak{a})\leq1$, then the random walk $\mathbf{S}_t(\mathfrak{a})$ is recurrent. Otherwise, the random walk $\mathbf{S}_t(\mathfrak{a})$ is transient.
\end{thm}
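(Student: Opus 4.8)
The plan is to reduce the recurrence/transience problem for the two-dimensional walk $\mathbf{S}_t(a)$ to the recurrence/transience of the one-dimensional birth-and-death process $N_t = \|\mathbf{S}_t(a)\|$, and then to analyze that birth-and-death process by the classical criterion in terms of the product of ratios of up- and down-transition probabilities. The first step is to establish that $N_t$ (or a suitable lazy/observed version of it, obtained by watching the norm only when it actually changes) is itself a Markov chain on $\{0,1,2,\dots\}$, at least asymptotically: although the per-step transition probabilities of $\mathbf{S}_t(a)$ depend on the pair $(I_t,J_t)$ and not only on $N_t$, the assumptions \eqref{eq.1.21} together with \eqref{eq.1.20} force $\alpha_{(n^{(1)},n^{(2)})}$ to converge geometrically, as $\|\mathbf{n}\|\to\infty$ along fixed differences $n^{(2)}-n^{(1)}$, to the limits $\alpha^*_{n^{(2)}-n^{(1)}}$. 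This is what makes the norm process behave, for large $N_t$, like an autonomous birth-and-death chain whose up/down ratio is controlled by the $\alpha^*$'s. The internal coordinate $I_t$ (the distance from the diagonal) runs much faster than $N_t$ grows, so one averages over its quasi-stationary behavior; this averaging is exactly the origin of the inner product over $i$ and the factor $1/k$ in \eqref{eq.1.22}.

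Next I would compute, for the birth-and-death chain governing $N_t$, the quantity
\[
R_k \;=\; \frac{\mathsf{P}\{N_{t+1}=N_t+1 \mid N_t = 2k\}}{\mathsf{P}\{N_{t+1}=N_t-1 \mid N_t = 2k\}},
\]
and more precisely the partial products $\prod_{i} R_i$ that appear in the Lyapunov/potential-theoretic test for recurrence of a birth-and-death process, namely that the chain is recurrent iff $\sum_k \prod_{i\le k} R_i = \infty$. Expanding the transition probabilities \eqref{eq.1.4}--\eqref{eq.1.7} at a state with norm $N_t=n$ and minimum coordinate $I_t$, one gets contributions of the form $1 \pm 2\alpha_{(I_t,J_t)}/N_t$ (the factor $2$ and the combinations $2\alpha^*_{2k-2i}+4\alpha^*_{2k-2i-1}+2\alpha^*_{2k-2i-2}$ come from collecting the four cases of how a single $\pm\mathbf{1}_j$ step changes $I$, $J$, and hence $N$, together with the reflection rules near the axes and the diagonal). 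After substituting the geometric limits $\alpha^*$ and performing the averaging over the internal coordinate, the $k$-th partial product becomes exactly \eqref{eq.1.23}, and its Cesàro mean is $\kappa(a)$ as in \eqref{eq.1.22}. Then $\sum_k \prod_{i\le k} R_i = \infty$ translates into: the averaged partial products do not decay, i.e.\ $\kappa(a) \le 1$ gives divergence (recurrence) and $\kappa(a) > 1$ gives a geometric-type decay forcing convergence (transience). The borderline $\kappa(a)=1$ lands on the recurrent side because the correction terms are $o(1/k)$ after averaging, so the sum behaves like a harmonic-type divergent series rather than a convergent $p$-series with $p>1$.

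The last step is to transfer recurrence/transience from the norm process back to $\mathbf{S}_t(a)$ itself: on the state space $\{I_t=0\}\cup\{\text{diagonal}\}$ the walk is pushed toward the boundary, and since the walk is irreducible on the quarter plane with the given reflection rules, $\mathbf{S}_t(a)$ returns to a neighborhood of the origin infinitely often iff $N_t$ does. This requires checking that the fast internal coordinate $I_t$ cannot escape to spoil recurrence — i.e.\ that conditional on $N_t$ staying bounded, the pair $(I_t,J_t)$ is confined to finitely many states — which is immediate since $0\le I_t\le J_t$ and $I_t+J_t=N_t$.

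The main obstacle I expect is making the ``averaging over the fast internal coordinate $I_t$'' rigorous: one must show that, uniformly in the large parameter $k$, the norm process can be compared (via coupling or via a Lyapunov function that is constant along the diagonal direction and linear in $N$) to the explicit birth-and-death chain with ratios built from the $\alpha^*$'s, with errors that are summable and hence do not affect the dichotomy at $\kappa(a)=1$. This is where assumptions \eqref{eq.1.20} and \eqref{eq.1.21} must be used in full strength — the geometric convergence of $\alpha_{\mathbf{n}}\to\alpha^*$ is what guarantees the error terms are $o(1/k)$ and that the limit defining $\kappa(a)$ exists and governs the behavior; this is presumably the content of the fixed-point/measure-transform machinery alluded to in the abstract and developed in the later sections.
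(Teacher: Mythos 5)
Your overall strategy (compare the norm process $N_t=\|\mathbf{S}_t(a)\|$ with a birth-and-death chain and apply the product criterion \eqref{eq.1.1}) is indeed the paper's strategy, but the step where $\kappa(a)$ actually enters is not established, and the mechanism you describe is wrong. From \eqref{eq.1.4}--\eqref{eq.1.8}, in the interior and on the diagonal the norm moves up or down with probability exactly $1/2$: the $\alpha$-terms cancel in the one-step law of $N_t$, so there are no direct contributions ``$1\pm2\alpha_{(I_t,J_t)}/N_t$'' to the up/down ratio $R_k$. All of the drift of the norm comes from the boundary states $I_t=0$ (up $3/4$ vs.\ down $1/4$), so what must be computed is the equilibrium fraction of weight that a level set $\{\|\mathbf{n}\|=2k\}$ places on the boundary. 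That is where the products \eqref{eq.1.23} arise in the paper: they are products over the \emph{internal} coordinate at a \emph{fixed} level, coming from the within-level recursions for the stationary-type measure $q$ (Lemma \ref{L1}, Corollary \ref{C1}), which give level mass $\asymp 2k/\kappa(a)$, boundary fraction $\asymp\kappa(a)/(2k)$, and hence $P_n(a)/Q_n(a)=1+\kappa(a)/n+O(1/n^2)$. Your claim that the partial products of the level-to-level ratios $R_i$ ``become exactly \eqref{eq.1.23}'' conflates these two product structures: \eqref{eq.1.23} converges to a constant, whereas the partial products in the birth-and-death test behave like $n^{\pm\kappa}$. (Your statement of that test is also inverted: recurrence is equivalent to divergence of $\sum_n\prod_{k\le n}\mu_k/\lambda_k$, i.e.\ products of down/up ratios.) Moreover, the internal coordinate is not ``fast'' relative to the norm here --- both change by one unit per step --- so the averaging you invoke cannot be taken for granted; the paper replaces it by the explicit queueing-network stationary analysis of Sections \ref{S2}--\ref{S4}, with the fixed-point estimate \eqref{eq.3.42} (resting on \eqref{eq.1.21}) supplying the needed convergence of the within-level ratios. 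Without some equivalent of this computation, the identification of the effective drift coefficient with $\kappa(a)$ --- the heart of the theorem --- is unsupported.

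The second genuine gap is the borderline case. Your assertion that corrections which are $o(1/k)$ after averaging cannot affect the dichotomy at $\kappa(a)=1$ is false: a ratio $\lambda_n/\mu_n=1+1/n+c/(n\ln n)$ with $c>1$ is transient even though the correction is $o(1/n)$. To place $\kappa(a)=1$ on the recurrent side one needs error control strictly below the Bertrand scale $1/(n\ln n)$; the paper obtains remainders $O(1/k^2)$ both in the within-level expansions (Lemma \ref{L1}) and in the comparison of the non-Markov norm process with a genuine birth-and-death chain (\eqref{eq.5.20}, \eqref{eq.5.21}), and then concludes with the extended Bertrand--De Morgan test (Lemma \ref{L2}). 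This quantitative control, which your proposal flags only as ``the main obstacle,'' is exactly what is required and is missing; with only $o(1/k)$ accuracy the case $\kappa(a)=1$, and hence the statement of the theorem as a clean dichotomy at $\kappa(a)\le1$, cannot be concluded.
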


\subsection{Examples and intuition behind the main results}

\begin{example}\label{ex1}
The evident particular case of Theorem \ref{T1} is the case where $\mathfrak{a}=\{\alpha, \alpha,\ldots\}$ is the sequence of same value $\alpha$, which is similar to Example 4.1.9 of \cite{M_et_all} (based on the results of \cite{F, G}). In this case, if $\alpha>0$, then the random walk is transient, while in the opposite case, $\alpha\leq0$, the random walk is recurrent. Following Definition \ref{D1}, this random walk falls into the category of $\mathrm{e}^{8\alpha}$-random walks, since according to \eqref{eq.1.50}
$$
\kappa(\mathfrak{a})=\lim_{k\to\infty}\left(1+\frac{8\alpha}{k}\right)^{k-1}=\mathrm{e}^{8\alpha}.
$$
\end{example}

\noindent
\textit{Some intuition behind Theorem \ref{T1} and Example \ref{ex1}}. The centrally important issue explaining Theorem \ref{T1} is the fraction of time spent on the axis. Indeed, for large $n$ we have
\begin{equation*}\label{eq.1.30}
\frac{P_0(\mathfrak{a}; n+1|n)}{P_0(\mathfrak{a}; n-1|n)}=\frac{\lambda_n(\mathfrak{a})}{\mu_n(\mathfrak{a})},
\end{equation*}
where
\begin{equation*}\label{eq.1.31}
\begin{aligned}
\lambda_n(\mathfrak{a})\asymp&\frac{1}{2}\mathsf{P}\{\mathbf{S}_1(\mathfrak{a})~\text{in interior}~|~\|\mathbf{S}_{0}(\mathfrak{a})\|=n\}\\
&+\frac{3}{4}\mathsf{P}\{\mathbf{S}_1(\mathfrak{a})~\text{on boundary}~|~\|\mathbf{S}_{0}(\mathfrak{a})\|=n\},
\end{aligned}
\end{equation*}
\begin{equation*}\label{eq.1.32}
\begin{aligned}
\mu_n(\mathfrak{a})\asymp&\frac{1}{2}\mathsf{P}\{\mathbf{S}_1(\mathfrak{a})~\text{in interior}~|~\|\mathbf{S}_{0}(\mathfrak{a})\|=n\}\\
&+\frac{1}{4}\mathsf{P}\{\mathbf{S}_1(\mathfrak{a})~\text{on boundary}~|~\|\mathbf{S}_{0}(\mathfrak{a})\|=n\}.
\end{aligned}
\end{equation*}
Then, denoting $p_n(\mathfrak{a})=\mathsf{P}\{\mathbf{S}_1(\mathfrak{a})~\text{on boundary}~|~\|\mathbf{S}_{0}(\mathfrak{a})\|=n\}$ and keeping in mind that $\lambda_n(\mathfrak{a})+\mu_n(\mathfrak{a})=1$, we obtain
\begin{equation*}\label{eq.1.14}
\frac{P_0(\mathfrak{a}; n+1|n)}{P_0(\mathfrak{a}; n-1|n)}\asymp\frac{2+p_n(\mathfrak{a})}{2-p_n(\mathfrak{a})}\asymp1+p_n(\mathfrak{a}).
\end{equation*}
In Example \ref{ex1}, where $\mathfrak{a}=(\alpha, \alpha,\ldots)$, the result $\psi=\mathrm{e}^{8\alpha}$ means that for large $n$, $p_n(\mathfrak{a})$ is evaluated as $\mathrm{e}^{8\alpha}/n$.
The critical value of $\alpha$ separating recurrence and transience is zero. So, for positive $\alpha$ the random walk is transient, but for zero or negative $\alpha$ it is recurrent.

\smallskip

 Note, that the study of zero-drift random walks in a quarter plane by the method of constructing Lyapunov function made important to consider one-dimensional processes with asymptotically zero drifts. These problems were originally formulated by Harris \cite{Harris1, Harris2} and then comprehensively solved by Lamperti \cite{Lamperti1, Lamperti2}. Essential generalization of  \cite{Lamperti1, Lamperti2} has been provided in \cite{MAI} for the processes whose increments have moments of the order of $2+\epsilon$, $\epsilon>0$. The example below demonstrates the results on recurrence and transience for quite general state-dependent one-dimensional random walk.

\begin{example}
For similarly defined one-dimensional random walks we have as follows. Let $\mathfrak{a}=(\alpha_1, \alpha_2,\ldots)$, and let $S_0(\mathfrak{a})=1$,
$S_t(\mathfrak{a})=S_{t-1}(\mathfrak{a})+e_t(\mathfrak{a})$, $t\geq1$, where $e_t(\mathfrak{a})$ takes values $\pm1$, and the distribution of $S_t(\mathfrak{a})$ is defined by the following conditions:

\begin{eqnarray*}
  \mathsf{P}\{S_{t+1}(\mathfrak{a})=S_t(\mathfrak{a})+1~|~S_t(\mathfrak{a})>0\} &=& \frac{1}{2}+\frac{\alpha_{S_t(\mathfrak{a})}}{S_t(\mathfrak{a})},\label{eq.1.16} \\
  \mathsf{P}\{S_{t+1}(\mathfrak{a})=S_t(\mathfrak{a})-1~|~S_t(\mathfrak{a})>0\} &=& \frac{1}{2}-\frac{\alpha_{S_t(\mathfrak{a})}}{S_t(\mathfrak{a})}.\label{eq.1.17}\\
  \mathsf{P}\{S_{t+1}(\mathfrak{a})=1~|~S_t(\mathfrak{a})=0\}&=&1.
\end{eqnarray*}
The values $\alpha_n$ satisfy the condition
$$
0<\alpha_n<\min\left\{C, \frac{1}{2}n\right\}, \quad C>0.
$$
\end{example}

Then, the behaviour of this one-dimensional random walk is associated with the behaviour of the birth-and-death process with the birth rates $\lambda_n=1/2+\alpha_n/n$  and death rates $\mu_n=1/2-\alpha_n/n$ ($n\geq1$). It is well-known that a birth-and-death process is recurrent if and only if
\begin{equation}\label{eq.1.1}
\sum_{n=1}^\infty\prod_{k=1}^{n}\frac{\mu_k}{\lambda_k}=\infty
\end{equation}
see \cite{KM}, p. 370 or \cite{M_et_all}, Theorem 2.2.5. The conditions on the sequence $\alpha_n$ under which the random walk is recurrent or transient follow directly from the extended Bertrand--De Morgan test for convergence or divergence of positive series \cite{A3, M}, or extended Bertrand--De Morgan--Cauchy test \cite{A4}. For instance, from \cite{A3} we arrive at the following result.
Let  $\ln_{(K)} x$ denote the $K$th iterate of natural logarithm, i.e. $\ln_{(1)}x=\ln x$ and for $k\geq 2$, $\ln_{(k)}x=\ln_{(k-1)}(\ln x)$.
\begin{prop}\label{prop1}
The random walk is transient if there exist $c>1$, $K\geq1$ and $n_0$ such that for all $n>n_0$
$$
\alpha_n\geq \frac{1}{4}\left(1+\sum_{i=1}^{K-1}\frac{1}{\prod_{j=1}^{i}\ln_{(j)}n}+\frac{c}{\prod_{j=1}^{K}\ln_{(j)}n}\right),
$$
and is recurrent if there exist $K\geq1$ and $n_0$ such that for all $n>n_0$
$$
\alpha_n\leq \frac{1}{4}\left(1+\sum_{i=1}^{K}\frac{1}{\prod_{j=1}^{i}\ln_{(j)}n}\right).
$$
\end{prop}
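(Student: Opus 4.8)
The plan is to read recurrence and transience off the series criterion \eqref{eq.1.1}. Put $b_n=\prod_{k=1}^{n}\mu_k/\lambda_k$, so that the walk is recurrent exactly when $\sum_{n\ge1}b_n=\infty$ and transient exactly when $\sum_{n\ge1}b_n<\infty$. I would then invoke the extended Bertrand--De Morgan test of \cite{A3}, \cite{AAT} in its form for the ratio $b_n/b_{n+1}$: the series $\sum_n b_n$ converges if there are $c>1$, $K\ge1$ and $n_0$ with
\[
\frac{b_n}{b_{n+1}}\ge 1+\frac1n+\sum_{i=1}^{K-1}\frac{1}{n\prod_{j=1}^{i}\ln_{(j)}n}+\frac{c}{n\prod_{j=1}^{K}\ln_{(j)}n}\qquad(n>n_0),
\]
and diverges if there are $K\ge1$ and $n_0$ with
\[
\frac{b_n}{b_{n+1}}\le 1+\frac1n+\sum_{i=1}^{K}\frac{1}{n\prod_{j=1}^{i}\ln_{(j)}n}\qquad(n>n_0).
\]

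The next step is the exact computation
\[
\frac{b_n}{b_{n+1}}=\frac{\lambda_{n+1}}{\mu_{n+1}}=\frac{(n+1)+2\alpha_{n+1}}{(n+1)-2\alpha_{n+1}}=1+\frac{4\alpha_{n+1}}{(n+1)-2\alpha_{n+1}}.
\]
Since $0<\alpha_n<C$, the quantity $b_n/b_{n+1}-1$ tends to $0$ and lies between $4\alpha_{n+1}/(n+1)$ and $\big(4\alpha_{n+1}/(n+1)\big)\big(1+O(1/n)\big)$, so the comparison demanded by the test reduces, modulo lower-order terms, to comparing $4\alpha_{n+1}$ with $1+\sum_{i}\big(\prod_{j}\ln_{(j)}(n+1)\big)^{-1}$, which is precisely the content of the two hypotheses after clearing the factor $\tfrac14$. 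For the transient half I would use the lower bound on $\alpha_{n+1}$ together with $(n+1)-2\alpha_{n+1}<n+1$ to get
\[
\frac{b_n}{b_{n+1}}\ge 1+\frac{1}{n+1}+\sum_{i=1}^{K-1}\frac{1}{(n+1)\prod_{j=1}^{i}\ln_{(j)}(n+1)}+\frac{c}{(n+1)\prod_{j=1}^{K}\ln_{(j)}(n+1)},
\]
and, since $n+1\sim n$ and $\prod_{j}\ln_{(j)}(n+1)\sim\prod_{j}\ln_{(j)}n$, deduce the convergence inequality above with any fixed $c'\in(1,c)$; hence $\sum_n b_n<\infty$ and the walk is transient. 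For the recurrent half, the upper bound on $\alpha_{n+1}$ forces $2\alpha_{n+1}<1$ for all large $n$, so $(n+1)-2\alpha_{n+1}>n$ and therefore
\[
\frac{b_n}{b_{n+1}}\le 1+\frac{4\alpha_{n+1}}{n}\le 1+\frac1n+\sum_{i=1}^{K}\frac{1}{n\prod_{j=1}^{i}\ln_{(j)}(n+1)}\le 1+\frac1n+\sum_{i=1}^{K}\frac{1}{n\prod_{j=1}^{i}\ln_{(j)}n},
\]
the last step using that each $\ln_{(j)}$ is increasing; this is the divergence inequality, so $\sum_n b_n=\infty$ and the walk is recurrent.

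The only delicate point is the bookkeeping around the shift $n\mapsto n+1$ inside the iterated logarithms and around replacing $(n+1)-2\alpha_{n+1}$ by $n$ or $n+1$ in the denominator: each such replacement perturbs the relevant terms by $O(1/n^{2})$, and since $\prod_{j=1}^{K}\ln_{(j)}n=o(n)$ this is negligible both against the slack $\tfrac{c-1}{n\prod_{j}\ln_{(j)}n}$ that drives the convergence direction and against the last retained term $\tfrac{1}{n\prod_{j=1}^{K}\ln_{(j)}n}$ in the divergence direction. Once that is verified, the proposition is a direct substitution into the cited test, requiring no probabilistic input beyond \eqref{eq.1.1}.
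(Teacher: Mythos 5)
Your proposal is correct and follows essentially the same route as the paper: the paper obtains Proposition \ref{prop1} by combining the birth-and-death recurrence criterion \eqref{eq.1.1} with the extended Bertrand--De Morgan test (the content of Lemma \ref{L2}, cited from \cite{A3}, \cite{AAT}), which is exactly your argument with the algebra $b_n/b_{n+1}=\lambda_{n+1}/\mu_{n+1}=1+4\alpha_{n+1}/\bigl((n+1)-2\alpha_{n+1}\bigr)$ and the $n\mapsto n+1$ bookkeeping made explicit. The handling of the index shift via a slightly smaller $c'\in(1,c)$ in the transient case and via $(n+1)-2\alpha_{n+1}>n$ in the recurrent case is sound.
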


For earlier particular results related to this random walk see \cite{Chung, Harris1, H, K}.

\subsection{Outline of the paper} The paper is organized as follows. In Section \ref{S2}, we provide theoretical grounds for modelling the family of random walks. That is, we construct a queueing network that models the family of random walks. In Section \ref{S3}, we write Chapman-Kolmogorov system of equations describing the dynamics of queue-length processes in the network. In Section \ref{S4}, we provide asymptotic study of queue-length processes, and on its basis prove the basic result of the paper. Lemma \ref{L3} of Section \ref{S4} plays key role in the proof.
In Section \ref{S5}, we conclude the paper with the discussion of the result and possible future research.

\section{Theoretical grounds for modelling the family of random walks}\label{S2}

\subsection{Prelude} The aim of this section is to represent the family of random walks in the form that is convenient for its further analytical study. To do that,
we assume that a random walk of the family stays an exponentially distributed random time in any of its states prior moving to another state, and all exponentially distributed times are independent and identically distributed. (This assumption is not the loss of generality. Some details explaining this construction can be found in \cite{A,A5, A6}.)
Then, the meaning of the time parameter $t$ is the $t$th event in a Poisson process. Advantage of the reduction to the continuous time process is that enables us to use technical simplicity of standard methods for continuous time Markov processes and queueing theory.

Therefore, analysis of relations \eqref{eq.1.4} -- \eqref{eq.1.13} reduces our study to an analysis of a state-dependent network of two $M/M/1$ queueing systems.

\subsection{Queueing models of the family of reflected random walks}
\subsubsection{Queueing model for reflected simple random walk}\label{S2.2.1}
Consider first the queueing model for the reflected simple two-dimensional random walk ${\mathbf{S}}_t(\mathfrak{o})$. The simple two-dimensional random walk belongs to the family of symmetric two-dimensional random walks \cite{A}. The queueing model of reflected symmetric random walks has been described in \cite{A}, and here we recall this construction for ${\mathbf{S}}_t(\mathfrak{o})$.  The aforementioned reflected random walk ${\mathbf{S}}_t(\mathfrak{o})$ is modelled with the aid of two independent and identical $M/M/1$ queueing systems with same arrival and service rates. If a system is free, then the server switches on \textit{negative} service having the exponential distribution with the same mean as interarrival or service time.
A negative service results a new customer in the system. If during a negative service a new customer arrives, then the negative service is interrupted and not resumed. In other words, the negative service is equivalent to the doubled arrival rate when a system is empty, which is the result of a reflection at zero. Analysis of the system of two queues led to the following results. Let $\mathbf{i}=(i^{(1)}, i^{(2)})$ denote a vector with integer nonnegative components, and let $P_\tau(\mathbf{i})$ denote the probability that at time $\tau$ there are $i^{(1)}$ customers in the first system and $i^{(2)}$ customers in the second one (the time parameter $\tau$ is assumed to be a continuous variable).

A result obtained in \cite{A} (see the proof of Theorem 2.1 in \cite{A}) for these queues is convenient to reformulate as follows
\begin{equation}\label{eq.2.1}
\lim_{\tau\to\infty}\frac{P_\tau(\mathbf{i})}{P_\tau(\mathbf{0})}=\begin{cases}4, &\text{if} \ i^{(1)}>0 \ \text{and} \ i^{(2)}>0,\\
2, &\text{if} \ i^{(1)}>0, i^{(2)}=0 \ \text{or} \ i^{(2)}>0, i^{(1)}=0.
\end{cases}
\end{equation}
Now, let $\mathcal{N}^+(n)$ denote the set of nonnegative vectors $\mathbf{i}$, the sum of the components of which is equal to $n$. Then, from \eqref{eq.2.1} we have
\begin{equation}\label{eq.2.2}
\lim_{\tau\to\infty}\frac{\sum_{\mathbf{i}\in\mathcal{N}^+(n)}P_\tau(\mathbf{i})}{P_\tau(\mathbf{0})}=4n.
\end{equation}
In turn, based on \eqref{eq.2.2} we then arrived at the conclusion, that
$$
\lambda_n(\mathfrak{o})=P_0(\mathfrak{o}; n+1|n)=\lim_{t\to\infty}\mathsf{P}\{\|\mathbf{S}_{t+1}(\mathfrak{o})\|=n+1~\big|~\|\mathbf{S}_{t}(\mathfrak{o})\|=n\},
$$
and
$$
\mu_n(\mathfrak{o})=1-\lambda_n(\mathfrak{o})
$$
are, respectively, the rates that are proportional to the birth and death rates of $BD(2,2)$, which is the birth-and-death process introduced in \cite{A}. The birth and death rates of $BD(2,2)$ are given by
\begin{eqnarray*}
  \lambda_n(2,2) &=& 8n+4, \\
  \mu_n(2,2)     &=& 8n-4,
\end{eqnarray*}
and
$$
\frac{\lambda_n(2,2)}{\mu_n(2,2)}=1+\frac{1}{n}+O\left(\frac{1}{n^2}\right),
$$
that constitutes null-recurrence of $BD(2,2)$ (see Lemma 4.2 in \cite{A}) and, hence, recurrence of $\mathbf{S}_t(\mathfrak{o})$.

\subsubsection{Queueing model for reflected random walks in general case}\label{S2.2.2}
In contrast to the case considered in Section \ref{S2.2.1}, the reflected random walk ${\mathbf{S}}_t(\mathfrak{a})$ is modelled as the network of two dependent and, in addition, state-dependent $M/M/1$ queueing systems as described below.
 As in the case considered above, each of these system, being free, switches to negative service having the exponential distribution with mean 1, and if during the negative service an arrival of a customer in the empty system occurs, the negative service is interrupted. The means of customer interarrival and service times depend on the network state as follows. Let $Q_\tau^{(1)}$ and $Q_\tau^{(2)}$ denote the number of customers in the first and, respectively, second queueing systems at time $\tau$ and let $N_\tau=Q_\tau^{(1)}+Q_\tau^{(2)}$. If both $Q_\tau^{(1)}$ and $Q_\tau^{(2)}$ are positive and $Q_\tau^{(1)}<Q_\tau^{(2)}$, then the mean interarrival times in the first and second systems are $1/\big(1-4\alpha_{(Q_\tau^{(1)},Q_\tau^{(2)})}/N_\tau\big)$ and $1/\big(1+4\alpha_{(Q_\tau^{(1)},Q_\tau^{(2)})}/N_\tau\big)$, respectively, and the mean service times are $1/\big(1+4\alpha_{(Q_\tau^{(1)},Q_\tau^{(2)})}/N_\tau\big)$ and $1/\big(1-4\alpha_{(Q_\tau^{(1)},Q_\tau^{(2)})}/N_\tau\big)$, respectively. If $Q_\tau^{(1)}>Q_\tau^{(2)}$, then the mean interarrival times in the first and second systems are $1/(1+4\alpha_{(Q_\tau^{(2)},Q_\tau^{(1)})}/N_\tau)$ and $1/(1-4\alpha_{(Q_\tau^{(2)},Q_\tau^{(1)})}/N_\tau)$, respectively, while the mean service times are $1/\big(1-4\alpha_{(Q_\tau^{(2)},Q_\tau^{(1)})}/N_\tau\big)$ and $1/\big(1+4\alpha_{(Q_\tau^{(2)},Q_\tau^{(1)})}/N_\tau\big)$, respectively. In the cases where $Q_\tau^{(1)}$ and $Q_\tau^{(2)}$ are positive and equal, the means of interarrival and service times of both systems is 1. If one system is empty, then the means of interarrival and service times in another system are equal to 1, and in the first system the mean interarrival time and mean negative service time both are equal to 1 as well.

When $\mathfrak{a}\neq \mathfrak{o}$, the network of queueing systems (which will be called later $\mathfrak{a}$-network) has a complex structure. The system of the equations for queue-length probabilities cannot be presented in product form, and the arguments that were earlier used for symmetric family of independent random walks and leading to exact representations become disable. So, the aim is to study structural properties of the distributions, and on the basis of them to arrive at the correct conclusions on the family of random walks.

Note that typical queueing networks, including those state-dependent, suppose the presence of routing mechanism between the queueing systems connected into the network. In \cite{MP} a large number of different examples of application of state-dependent queueing networks have been considered. The  state-dependent network of two queueing systems that models the random walks considered in the present paper does not contain an explicit connection between the queues via the routing mechanism.
The connection between the queues is implicitly provided via the dependence of arrival and service rates in both queueing systems upon the total number of customers presenting in two queues. Similar type of networks describe models of service allocations and have been studied in \cite{SS}.

\section{The system of equations for the state probabilities}\label{S3}

In this section, we derive equations for state probabilities of the $a$-network. We say that $a$-network is in state $\mathbf{n}=(n^{(1)}, n^{(2)})$, $n^{(1)}\leq n^{(2)}$, if there are $n^{(1)}$ customers in the smallest queue and $n^{(2)}$ customers in the largest one. This definition of state is not traditional. The use of this definition is motivated by the symmetry of a random walk (see Figure 1) and its reduction to the random walk in a wedge (see Figure 2) and enables us
to diminish the number of possible cases and, hence, the number of the
equations that describe the state probabilities, compared to that might be written in the case of traditional description of the system state. This, however, requires to use a specified algebra rule in derivation of the Chapman-Kolmogorov system of equations for the state probabilities that is explained in the case studies below (see for instance the comparison of the transition probabilities in Figures 1 and 2 for the boundary case). Note, that compared to the rates indicated in Section \ref{S2.2.2}, the rates in the equations below are given multiplied by factor 4.

Denote by $P_\tau\big(\mathfrak{a},\mathbf{n}\big)$ the probability that at time $\tau$ the $\mathfrak{a}$-network is in state $\mathbf{n}=(n^{(1)}, n^{(2)})$.
Then, the Chapman-Kolmogorov system of equations is as follows.

\begin{enumerate}
  \item \textit{Case} $n^{(1)}>1$ \textit{and} $n^{(2)}>n^{(1)}+1$:
  \begin{equation}\label{eq.3.1}
  \begin{aligned}
  &\frac{\mathrm{d}P_\tau(\mathfrak{a},\mathbf{n})}{\mathrm{d}\tau}+4P_\tau(\mathfrak{a},\mathbf{n})\\
  &=\left(1-\frac{4\alpha_{\mathbf{n}-\mathbf{1}_1}}{\|\mathbf{n}\|-1}\right)P_\tau(\mathfrak{a},\mathbf{n}-\mathbf{1}_1)\\
  +&\left(1+\frac{4\alpha_{\mathbf{n}-\mathbf{1}_2}}{\|\mathbf{n}\|-1}\right)P_\tau(\mathfrak{a},\mathbf{n}-\mathbf{1}_2)\\
  +&\left(1+\frac{4\alpha_{\mathbf{n}+\mathbf{1}_1}}{\|\mathbf{n}\|+1}\right)P_\tau(\mathfrak{a},\mathbf{n}+\mathbf{1}_1)\\
  +&\left(1-\frac{4\alpha_{\mathbf{n}+\mathbf{1}_2}}{\|\mathbf{n}\|+1}\right)P_\tau(\mathfrak{a},\mathbf{n}+\mathbf{1}_2).
  \end{aligned}
  \end{equation}
  Recall that $\mathbf{1}_1=(1,0)$ and $\mathbf{1}_2=(0,1)$.

  \smallskip
  \item \textit{Case} $n^{(1)}>1$ \textit{and} $n^{(2)}=n^{(1)}+1$:
  \begin{equation}\label{eq.3.3}
  \begin{aligned}
  &\frac{\mathrm{d}P_\tau(\mathfrak{a},\mathbf{n})}{\mathrm{d}\tau}+4P_\tau(\mathfrak{a},\mathbf{n})\\
  &=\left(1-\frac{4\alpha_{\mathbf{n}-\mathbf{1}_1}}{\|\mathbf{n}\|-1}\right)P_\tau(\mathfrak{a},\mathbf{n}-\mathbf{1}_1)\\
  +&2P_\tau(\mathfrak{a},\mathbf{n}-\mathbf{1}_2)
  +2P_\tau(\mathfrak{a},\mathbf{n}+\mathbf{1}_1)\\
  +&\left(1-\frac{4\alpha_{\mathbf{n}+\mathbf{1}_2}}{\|\mathbf{n}\|+1}\right)P_\tau(\mathfrak{a},\mathbf{n}+\mathbf{1}_2).
  \end{aligned}
  \end{equation}
  The terms $2P_\tau(\mathfrak{a},\mathbf{n}-\mathbf{1}_2)$ and $2P_\tau(\mathfrak{a},\mathbf{n}+\mathbf{1}_1)$ both enter with coefficient 2 since the vectors
  $\mathbf{n}-\mathbf{1}_2$ and $\mathbf{n}+\mathbf{1}_1$ have the first and second coordinates equal.

  \smallskip
  \item \textit{Case} $n^{(1)}=1$ \textit{and} $n^{(2)}>2$:
  \begin{equation}\label{eq.3.5}
  \begin{aligned}
  &\frac{\mathrm{d}P_\tau(\mathfrak{a},\mathbf{n})}{\mathrm{d}\tau}+4P_\tau(\mathfrak{a},\mathbf{n})\\
  &=2P_\tau(\mathfrak{a},\mathbf{n}-\mathbf{1}_1)\\
  +&\left(1+\frac{4\alpha_{\mathbf{n}-\mathbf{1}_2}}{\|\mathbf{n}\|-1}\right)P_\tau(\mathfrak{a},\mathbf{n}-\mathbf{1}_2)\\
  +&\left(1+\frac{4\alpha_{\mathbf{n}}+\mathbf{1}_1}{\|\mathbf{n}\|+1}\right)P_\tau(\mathfrak{a},\mathbf{n}+\mathbf{1}_1)\\
  +&\left(1-\frac{4\alpha_{\mathbf{n}+\mathbf{1}_2}}{\|\mathbf{n}\|+1}\right)P_\tau(\mathfrak{a},\mathbf{n}+\mathbf{1}_2).
  \end{aligned}
  \end{equation}
  The term $2P_\tau(\mathfrak{a},\mathbf{n}-\mathbf{1}_1)$ enters with coefficient 2 since the first coordinate of the vector $\mathbf{n}-\mathbf{1}_1$ is zero, and, according to convention, the rate of arrival and negative service together make the total rate doubled.

  \smallskip
  \item \textit{Case} $\mathbf{n}=(1,2)$:
  \begin{equation}\label{eq.3.7}
  \begin{aligned}
  &\frac{\mathrm{d}P_\tau(\mathfrak{a},\mathbf{n})}{\mathrm{d}\tau}+4P_\tau(\mathfrak{a},\mathbf{n})\\
  &=2P_\tau(\mathfrak{a},\mathbf{n}-\mathbf{1}_1)\\
  +&2P_\tau(\mathfrak{a},\mathbf{n}-\mathbf{1}_2)
  +2P_\tau(\mathfrak{a},\mathbf{n}+\mathbf{1}_1)\\
  +&\left(1-\alpha_{\mathbf{n}+\mathbf{1}_2}\right)P_\tau(\mathfrak{a},\mathbf{n}+\mathbf{1}_2).
  \end{aligned}
  \end{equation}
  The term $2P_\tau(\mathfrak{a},\mathbf{n}-\mathbf{1}_1)$ enters with coefficient 2 for the reason indicated for Case (3). The terms $2P_\tau(\mathfrak{a},\mathbf{n}-\mathbf{1}_2)$ and $2P_\tau(\mathfrak{a},\mathbf{n}+\mathbf{1}_1)$ enter with coefficient 2 for the reason indicated for Case (2).

  \smallskip
  \item \textit{Case} $n^{(1)}=n^{(2)}\geq2$:
    \begin{equation}\label{eq.3.9}
  \begin{aligned}
  &\frac{\mathrm{d}P_\tau(\mathfrak{a},\mathbf{n})}{\mathrm{d}\tau}+4P_\tau(\mathfrak{a},\mathbf{n})\\
  &=\left(1-\frac{4\alpha_{\mathbf{n}-\mathbf{1}_1}}{\|\mathbf{n}\|-1}\right)P_\tau(\mathfrak{a},\mathbf{n}-\mathbf{1}_1)\\
  +&\left(1-\frac{4\alpha_{\mathbf{n}+\mathbf{1}_2}}{\|\mathbf{n}\|+1}\right)P_\tau(\mathfrak{a},\mathbf{n}+\mathbf{1}_2).
  \end{aligned}
  \end{equation}

  \smallskip
  \item \textit{Case} $\mathbf{n}=(1,1)$:
  \begin{equation}\label{eq.3.10}
  \begin{aligned}
  &\frac{\mathrm{d}P_\tau(\mathfrak{a},\mathbf{n})}{\mathrm{d}\tau}+4P_\tau(\mathfrak{a},\mathbf{n})\\
  &=2P_\tau(\mathfrak{a},\mathbf{n}-\mathbf{1}_1)\\
  +&\left(1-\frac{4\alpha_{\mathbf{n}+\mathbf{1}_2}}{3}\right)P_\tau(\mathfrak{a},\mathbf{n}+\mathbf{1}_2).
  \end{aligned}
  \end{equation}
  The term $2P_\tau(\mathfrak{a},\mathbf{n}-\mathbf{1}_1)$ enters with coefficient 2 for the reason indicated for Case (3).

  \smallskip
  \item \textit{Case} $\mathbf{n}=(0,n)$ for $n\geq2$:
  \begin{equation}\label{eq.3.12}
  \begin{aligned}
  &\frac{\mathrm{d}P_\tau(\mathfrak{a},\mathbf{n})}{\mathrm{d}\tau}+4P_\tau(\mathfrak{a},\mathbf{n})\\
  &=P_\tau(\mathfrak{a},\mathbf{n}-\mathbf{1}_2)+P_\tau(\mathfrak{a},\mathbf{n}+\mathbf{1}_2)\\
  +&\left(1+\frac{4\alpha_{\mathbf{n}+\mathbf{1}_1}}{\|\mathbf{n}\|+1}\right)P_\tau(\mathfrak{a},\mathbf{n}+\mathbf{1}_1).
  \end{aligned}
  \end{equation}

  \smallskip
  \item \textit{Case} $\mathbf{n}=(0,1)$:
  \begin{equation}\label{eq.3.11}
  \begin{aligned}
  &\frac{\mathrm{d}P_\tau(\mathfrak{a},\mathbf{n})}{\mathrm{d}\tau}+4P_\tau(\mathfrak{a},\mathbf{n})\\
  &=8P_\tau(\mathfrak{a},\mathbf{0})+2P_\tau(\mathfrak{a},\mathbf{n}+\mathbf{1}_1)
  +P_\tau(\mathfrak{a},\mathbf{n}+\mathbf{1}_2)
  \end{aligned}
  \end{equation}
  The term $8P_\tau(\mathfrak{a},\mathbf{0})$ enters with coefficient 8 since in state $\mathbf{0}$ two arrival processes and two negative services together result in rate 4, and, in addition the two coordinates of vector $\mathbf{0}$ are equal. The term $2P_\tau(\mathfrak{a},\mathbf{n}+\mathbf{1}_1)$ enters with coefficient 2 for the reason indicated for Case (2).

  \smallskip
  \item \textit{Case} $\mathbf{n}=\mathbf{0}$:
   \begin{equation}\label{eq.3.13}
  \frac{\mathrm{d}P_\tau(\mathfrak{a},\mathbf{0})}{\mathrm{d}\tau}+4P_\tau(\mathfrak{a},\mathbf{0})=P_\tau(\mathfrak{a},\mathbf{1}_2).
  \end{equation}

\end{enumerate}

\smallskip
Our next step is the steady-state solution as $\tau$ increases to infinity. As $\tau\to\infty$, all the terms $\mathrm{d}P_\tau(\mathfrak{a},\mathbf{n})/\mathrm{d}\tau$ and $P_\tau(\mathfrak{a},\mathbf{n})$ vanish.
However the limit
\begin{equation}\label{eq.3.30}
p(\mathfrak{a},\mathbf{n})=\lim_{\tau\to\infty}\frac{P_\tau(\mathfrak{a},\mathbf{n})}{P_\tau(\mathfrak{a},\mathbf{0})}.
\end{equation}
exists and is positive. We could not find the relevant property in an available literature. So, we are to prove \eqref{eq.3.30}. The direct proof is based on exploiting
relations \eqref{eq.3.1} -- \eqref{eq.3.13} and is cumbersome. Therefore, we first provide it in
the simplest case for $p(n)$, where $n$ is scalar value and in the case when the
system is homogeneous. Then we explain how this proof is adapted to the
case considered in the paper.

Let $r$ be a fixed real positive parameter, and let
\begin{eqnarray}\label{eq.3.2}
\frac{\mathrm{d}P_\tau(r,0)}{\mathrm{d}\tau}&=&-\lambda(r)P_\tau(r,0)+\mu(r)P_\tau(r,1),\nonumber\\
\frac{\mathrm{d}P_\tau(r,n)}{\mathrm{d}\tau}&=&\lambda(r)P_\tau(r,n-1)
-(\lambda(r)+\mu(r))P_\tau(r,n)\\
&&+\mu(r)P_\tau(r,n+1)\nonumber
\end{eqnarray}
be a system of differential equations describing the evolution of an $M/M/1$ system. If $\lambda(r)<\mu(r)$, then there is $\lim_{\tau\to\infty}P_\tau(r,n)=p(r,n)$ and the explicit form of this limit is known. Assume that $\lambda(r)<\mu(r)$ for positive $r$, $\rho(r)=\lambda(r)/\mu(r)$ increases in $r$, and $\rho(r)\to1$ as $r\to\infty$. Then, for any $n\geq1$ the solution $p(r,n)$ satisfies the following property. If $r_1<r_2$, then
$$
\frac{p(r_1,n)}{p(r_1,0)}<\frac{p(r_2,n)}{p(r_2,0)}.
$$
That is, the fraction ${p(r,n)}/{p(r,0)}$ increases in $r$, and, hence, there is the limit of this fraction, as $r\to\infty$. This limit is finite, and in the specific case of this series of systems is known to be equal to 1. The convergence is uniform, and exchanging the order of limit $r$ vs $\tau$ is available. That is,
$$
p(n)=\lim_{\tau\to\infty}\lim_{r\to\infty}\frac{P_\tau(r,n)}{P_\tau(r,0)}
$$
exists and is positive.

To adapt this proof to the case of the present paper we also introduce
a positive parameter $r$, considering the system of equations for $P_\tau(\mathfrak{a}, r, \mathbf{n})$ rather than for $P_\tau(\mathfrak{a}, \mathbf{n})$. For large $\|\mathbf{n}\|$ introduce the parameters
$$
\lambda_{\mathrm{min}}(r, \mathbf{n})=\lambda(r)\left(1-\frac{4\alpha_{\mathbf{n}}}{\|\mathbf{n}\|}\right), \quad \lambda_{\mathrm{max}}(r, \mathbf{n})=\lambda(r)\left(1+\frac{4\alpha_{\mathbf{n}}}{\|\mathbf{n}\|}\right),
$$
and
$$
\mu_{\mathrm{min}}(r, \mathbf{n})\equiv\mu_{\mathrm{min}}(\mathbf{n})=1-\frac{4\alpha_{\mathbf{n}}}{\|\mathbf{n}\|}, \quad \mu_{\mathrm{max}}(r, \mathbf{n})\equiv\mu_{\mathrm{max}}(\mathbf{n})=1+\frac{4\alpha_{\mathbf{n}}}{\|\mathbf{n}\|},
$$
where $\lambda(r)$ is a positive increasing sequence, and $\lim_{r\to\infty}\lambda(r)=1$.

Then, we can obtain the system of equations similar to that \eqref{eq.3.2}. As
an example, below we present the only one of the equations of that system. Specifically, equation \eqref{eq.3.1} in our new settings looks
\begin{equation}\label{eq.3.4}
  \begin{aligned}
  &\frac{\mathrm{d}P_\tau(\mathfrak{a},r, \mathbf{n})}{\mathrm{d}\tau}+2(1+\lambda(r))P_\tau(\mathfrak{a}, r, \mathbf{n})\\
  &=\lambda_{\mathrm{min}}(r, \mathbf{n}-\mathbf{1}_1)P_\tau(\mathfrak{a}, r, \mathbf{n}-\mathbf{1}_1)\\
  +&\lambda_{\mathrm{max}}(r, \mathbf{n}-\mathbf{1}_2)P_\tau(\mathfrak{a}, r, \mathbf{n}-\mathbf{1}_2)\\
  +&\mu_{\mathrm{max}}(r, \mathbf{n}+\mathbf{1}_1)P_\tau(\mathfrak{a}, r, \mathbf{n}+\mathbf{1}_1)\\
  +&\mu_{\mathrm{min}}(r, \mathbf{n}+\mathbf{1}_2)P_\tau(\mathfrak{a}, r, \mathbf{n}+\mathbf{1}_2).
  \end{aligned}
  \end{equation}
The other equations of the aforementioned system of equations are similar.

Since $\lambda_{\mathrm{min}}(r, \mathbf{n})<\mu_{\mathrm{min}}(r, \mathbf{n})$ and $\lambda_{\mathrm{max}}(r, \mathbf{n})<\mu_{\mathrm{max}}(r, \mathbf{n})$, then there is
the limiting stationary distribution as $\tau\to\infty$, i.e. $\lim_{\tau\to\infty}P_{\tau}(\mathfrak{a}, r, \mathbf{n})=p(\mathfrak{a}, r, \mathbf{n})$. Then, similarly to the case considered above, for any $r_1<r_2$ we have
\[
\frac{p(\mathfrak{a}, r_1, \mathbf{n})}{p(\mathfrak{a}, r_1, \mathbf{0})}<\frac{p(\mathfrak{a}, r_2, \mathbf{n})}{p(\mathfrak{a}, r_2, \mathbf{0})}.
\]
Hence, there is the limit of the fraction $p(\mathfrak{a}, r, \mathbf{n})/p(\mathfrak{a}, r, \mathbf{0})$ as $r\to\infty$. Similarly to the above, this limit is uniform, and hence the order of limits $\tau$ vs $r$ can be exchanged. That is,
\[
\lim_{\tau\to\infty}\lim_{r\to\infty}P_{\tau}(\mathfrak{a}, r, \mathbf{n})=\lim_{r\to\infty}\lim_{\tau\to\infty}P_{\tau}(\mathfrak{a}, r, \mathbf{n}),
\]
and
\[
\lim_{\tau\to\infty}\frac{P_{\tau}(\mathfrak{a},\mathbf{n})}{P_{\tau}(\mathfrak{a},\mathbf{0})}
\]
exists and is positive.
\smallskip

Correspondingly to the above Cases (1) -- (9) equations, we obtain the following system of equations.

\begin{enumerate}
  \item \textit{Case} $n^{(1)}>1$ \textit{and} $n^{(2)}>n^{(1)}+1$:
  \begin{equation}\label{eq.3.14}
  \begin{aligned}
  p(\mathfrak{a},\mathbf{n})&=\frac{1}{4}\left[\left(1-\frac{4\alpha_{\mathbf{n}-\mathbf{1}_1}}{\|\mathbf{n}\|-1}\right)p(\mathfrak{a},\mathbf{n}-\mathbf{1}_1)\right.\\
  +&\left(1+\frac{4\alpha_{\mathbf{n}-\mathbf{1}_2}}{\|\mathbf{n}\|-1}\right)p(\mathfrak{a},\mathbf{n}-\mathbf{1}_2)\\
  +&\left(1+\frac{4\alpha_{\mathbf{n}+\mathbf{1}_1}}{\|\mathbf{n}\|+1}\right)p(\mathfrak{a},\mathbf{n}+\mathbf{1}_1)\\
  +&\left.\left(1-\frac{4\alpha_{\mathbf{n}+\mathbf{1}_2}}{\|\mathbf{n}\|+1}\right)p(\mathfrak{a},\mathbf{n}+\mathbf{1}_2)\right].
  \end{aligned}
  \end{equation}

  \smallskip
  \item \textit{Case} $n^{(1)}>1$ \textit{and} $n^{(2)}=n^{(1)}+1$:
  \begin{equation}\label{eq.3.16}
  \begin{aligned}
  p(\mathfrak{a},\mathbf{n})&=\frac{1}{4}\left[\left(1-\frac{4\alpha_{\mathbf{n}-\mathbf{1}_1}}{\|\mathbf{n}\|-1}\right)p(\mathfrak{a},\mathbf{n}-\mathbf{1}_1)\right.\\
  +&2p(\mathfrak{a},\mathbf{n}-\mathbf{1}_2)
  +2p(\mathfrak{a},\mathbf{n}+\mathbf{1}_1)\\
  +&\left.\left(1-\frac{4\alpha_{\mathbf{n}+\mathbf{1}_2}}{\|\mathbf{n}\|+1}\right)p(\mathfrak{a},\mathbf{n}+\mathbf{1}_2)\right].
  \end{aligned}
  \end{equation}

  \smallskip
  \item \textit{Case} $n^{(1)}=1$ \textit{and} $n^{(2)}>2$:
  \begin{equation}\label{eq.3.18}
  \begin{aligned}
  p(\mathfrak{a},\mathbf{n})&=\frac{1}{4}\bigg[2P_\tau(\mathfrak{a},\mathbf{n}-\mathbf{1}_1)\\
  +&\left(1+\frac{4\alpha_{\mathbf{n}-\mathbf{1}_2}}{\|\mathbf{n}\|-1}\right)p(\mathfrak{a},\mathbf{n}-\mathbf{1}_2)\\
  +&\left(1+\frac{4\alpha_{\mathbf{n}+\mathbf{1}_1}}{\|\mathbf{n}\|+1}\right)p(\mathfrak{a},\mathbf{n}+\mathbf{1}_1)\\
  +&\left.\left(1-\frac{4\alpha_{\mathbf{n}+\mathbf{1}_2}}{\|\mathbf{n}\|+1}\right)p(\mathfrak{a},\mathbf{n}+\mathbf{1}_2)\right].
  \end{aligned}
  \end{equation}

  \smallskip
  \item \textit{Case} $\mathbf{n}=(1,2)$:
  \begin{equation*}\label{eq.3.20}
  \begin{aligned}
  p(\mathfrak{a},\mathbf{n})&=\frac{1}{4}\bigg[2p(\mathfrak{a},\mathbf{n}-\mathbf{1}_1)\\
  +&2p(\mathfrak{a},\mathbf{n}-\mathbf{1}_2)
  +2p(\mathfrak{a},\mathbf{n}+\mathbf{1}_1)\\
  +&\left.\left(1-\alpha_{\mathbf{n}+\mathbf{1}_2}\right)p(\mathfrak{a},\mathbf{n}+\mathbf{1}_2)\right].
  \end{aligned}
  \end{equation*}

  \smallskip
  \item \textit{Case} $n^{(1)}=n^{(2)}\geq2$:
    \begin{equation}\label{eq.3.22}
  \begin{aligned}
  p(\mathfrak{a},\mathbf{n})&=\frac{1}{4}\bigg[\left(1-\frac{4\alpha_{\mathbf{n}-\mathbf{1}_1}}{\|\mathbf{n}\|-1}\right)p(\mathfrak{a},\mathbf{n}-\mathbf{1}_1)\\
  +&\left.\left(1-\frac{4\alpha_{\mathbf{n}+\mathbf{1}_2}}{\|\mathbf{n}\|+1}\right)p(\mathfrak{a},\mathbf{n}+\mathbf{1}_2)\right].
  \end{aligned}
  \end{equation}

  \smallskip
  \item \textit{Case} $\mathbf{n}=(1,1)$:
  \begin{equation*}\label{eq.3.23}
  \begin{aligned}
  p(\mathfrak{a},\mathbf{n})&=\frac{1}{4}\bigg[2p(\mathfrak{a},\mathbf{n}-\mathbf{1}_1)\\
  +&\left.\left(1-\frac{4\alpha_{\mathbf{n}+\mathbf{1}_2}}{3}\right)p(\mathfrak{a},\mathbf{n}+\mathbf{1}_2)\right].
  \end{aligned}
  \end{equation*}

  \smallskip
  \item \textit{Case} $\mathbf{n}=(0,n)$ for $n\geq2$:
  \begin{equation}\label{eq.3.25}
  \begin{aligned}
  p(\mathfrak{a},\mathbf{n})&=\frac{1}{4}\bigg[p(\mathfrak{a},\mathbf{n}-\mathbf{1}_2)+p(\mathfrak{a},\mathbf{n}+\mathbf{1}_2)\\
  +&\left.\left(1+\frac{4\alpha_{\mathbf{n}+\mathbf{1}_1}}{\|\mathbf{n}\|+1}\right)p(\mathfrak{a},\mathbf{n}+\mathbf{1}_1)\right].
  \end{aligned}
  \end{equation}

  \smallskip
  \item \textit{Case} $\mathbf{n}=(0,1)$:
  \begin{equation}\label{eq.3.24}
  p(\mathfrak{a},\mathbf{n})=\frac{1}{4}\left[8p(\mathfrak{a},\mathbf{0})+2p(\mathfrak{a},\mathbf{n}+\mathbf{1}_1)
  +p(\mathfrak{a},\mathbf{n}+\mathbf{1}_2)\right].
  \end{equation}

  \smallskip
  \item \textit{Case} $\mathbf{n}=\mathbf{0}$:
   \begin{equation}\label{eq.3.26}
  p(\mathfrak{a},\mathbf{0})=\frac{1}{4}p(\mathfrak{a},\mathbf{1}_2)=1.
  \end{equation}
\end{enumerate}

In the particular case $\mathfrak{a}=\mathfrak{o}$, the representation for $p[\mathfrak{o}, (i,j)]$ has been derived explicitly in \cite{A}. Specifically, ${\mathbf{S}}_\tau(\mathfrak{o})$ denote a continuous time version of the reflected simple random walk that is understood as follows.  According to the convention, a discrete time $t$ in the notation ${\mathbf{S}}_t(\mathfrak{o})$ denote the $t$th event of a Poisson process. Then, in the new time scale the reflected simple random walk ${\mathbf{S}}_t(\mathfrak{o})$ can be extended to the continuous time process ${\mathbf{S}}_\tau(\mathfrak{o})$, where $\tau$ is a continuous time.
Let $\mathbf{n}_1=\big(n^{(1)}_1,n^{(2)}_1\big)$ and
$\mathbf{n}_2=\big(n^{(1)}_2,n^{(2)}_2\big)$ be two states (recall that $n^{(1)}_1\leq n^{(2)}_1$ and $n^{(1)}_2\leq n^{(2)}_2$), and let $n^{(1)}_1>0$ and $n^{(2)}_1>n^{(1)}_1$.
Denote
\begin{equation}\label{eq.3.29}
R(\mathfrak{o}, \mathbf{n}_1, \mathbf{n}_2)=\lim_{\tau\to\infty}\frac{\mathsf{P}\{{\mathbf{S}}_\tau(\mathfrak{o})=\mathbf{n}_1\}}{\mathsf{P}\{{\mathbf{S}}_\tau(\mathfrak{o})=\mathbf{n}_2\}}.
\end{equation}
Following the result in  \cite{A}, the ratio in \eqref{eq.3.29} can be calculated explicitly. Indeed, the assumption $n^{(1)}_1>0$ implies $n^{(2)}_1>0$ since according to convention $n^{(1)}_1\leq n^{(2)}_1$. So, the both coordinates of a vector $\mathbf{n}_1$ are positive. According to the other assumption $n^{(2)}_1>n^{(1)}_1$, the coordinates of a vector $\mathbf{n}_1$ are distinct.

If the same is true for a vector $\mathbf{n}_2$ i.e. its coordinates are positive and distinct, then $R(\mathfrak{o}, \mathbf{n}_1, \mathbf{n}_2)=1$. If both coordinates of a vector $\mathbf{n}_2$ are positive but equal, then $R(\mathfrak{o}, \mathbf{n}_1, \mathbf{n}_2)=2$. Another possible case is where the first coordinate of a vector $\mathbf{n}_2$ is zero but the second one is positive. In that case $R(\mathfrak{o}, \mathbf{n}_1, \mathbf{n}_2)=2$ too. In the last case where $\mathbf{n}_2=\mathbf{0}$, we have $R(\mathfrak{o}, \mathbf{n}_1, \mathbf{n}_2)=8$. So,
\begin{equation}\label{eq.3.28}
R(\mathfrak{o}, \mathbf{n}_1, \mathbf{n}_2)=\begin{cases}1, &\text{if} \ n^{(1)}_2>0 \ \text{and} \ n^{(1)}_2\neq n^{(2)}_2,\\
2, &\text{if} \ n^{(1)}_2>0 \ \text{and} \ n^{(1)}_2=n^{(2)}_2,\\
2, &\text{if} \ n^{(1)}_2=0, n^{(2)}_2>0,\\
8, &\text{if} \ n^{(1)}_2=0 \ \text{and} \ n^{(2)}_2=0.
\end{cases}
\end{equation}

In terms of the notation $p[\mathfrak{o}, (i,j)]$ in \eqref{eq.3.30}, relation \eqref{eq.3.28} is equivalent to
\begin{equation}\label{eq.3.27}
p[\mathfrak{o}, (i,j)]=\begin{cases}1, &\text{for} \ i=0 \ \text{and} \ j=0,\\
4, &\text{for} \ i>0 \ \text{and} \ i=j,\\
4, &\text{for} \ i=0 \ \text{and} \ j>0,\\
8, &\text{for} \ \text{all other cases}.
\end{cases}
\end{equation}

\section{Asymptotic analysis as $\|\mathbf{n}\|$ increases to infinity and the proof of Theorem \ref{T1}}\label{S4}
\subsection{Preliminaries}\label{S4.1}

 As $n\to\infty$, from the explicit representations for $p(\mathfrak{a},\mathbf{n})$ we will derive asymptotic expansions. For this purpose, we use the notation $p(\mathfrak{a},\mathbf{n})=p[\mathfrak{a},(n^{(1)},n^{(2)})],$ $n^{(1)}+n^{(2)}=\|\mathbf{n}\|=n$.
If $n$ is large, and $i$ and $j$ are integer numbers, $i+j=n$, $1<i< j-1$, then the basic asymptotic properties that are essentially used below are as follows:
\begin{equation}\label{eq.4.1}
\begin{aligned}
  &p[\mathfrak{a},(i,j)]=\frac{1}{4}\left(1-\frac{4\alpha_{(i-1, j)}}{n}\right)p[\mathfrak{a},(i-1, j)]\\
  &\ \ \ +\frac{1}{4}\left(1-\frac{4\alpha_{(i, j+1)}}{n}\right)p[\mathfrak{a},(i, j+1)]\\
  &\ \ \ +\frac{1}{4}\left(1+\frac{4\alpha_{(i+1, j)}}{n}\right)p[\mathfrak{a},(i+1, j)]\\
  &\ \ \ +\frac{1}{4}\left(1+\frac{4\alpha_{(i, j-1)}}{n}\right)p[\mathfrak{a},(i, j-1)]\\
  & \ \ \ \ +O\left(\frac{1}{n^2}\right),
\end{aligned}
\end{equation}
\begin{equation}\label{eq.4.2}
\begin{aligned}
  &p[\mathfrak{a},(i,i+1)]=\frac{1}{4}\left(1-\frac{4\alpha_{(i-1,i+1)}}{n}\right)p[\mathfrak{a},(i-1,i+1)]\\
  &\ \ \ +\frac{1}{4}\left(1-\frac{4\alpha_{(i, i+2)}}{n}\right)p[\mathfrak{a},(i, i+2)]\\
  &\ \ \ +\frac{1}{2}p[\mathfrak{a}, (i,i)]+\frac{1}{2}p[\mathfrak{a},(i+1,i+1)]\\
  & \ \ \ \ +O\left(\frac{1}{n^2}\right),
\end{aligned}
\end{equation}
where $n$ in \eqref{eq.4.2} is assumed to be equal to $2i+1$, and
\begin{equation}\label{eq.4.3}
\begin{aligned}
  &p[\mathfrak{a},(i,i)]=\frac{1}{2}\left(1-\frac{4\alpha_{(i-1,i)}}{n}\right)p[\mathfrak{a},(i-1,i)]\\
  &\ \ \ +\frac{1}{2}\left(1-\frac{4\alpha_{(i, i+1)}}{n}\right)p[\mathfrak{a},(i, i+1)]\\
  & \ \ \ \ +O\left(\frac{1}{n^2}\right),
\end{aligned}
\end{equation}
where $n$ in \eqref{eq.4.3} is assumed to be equal to $2i$.

Asymptotic relations \eqref{eq.4.1}, \eqref{eq.4.2} and \eqref{eq.4.3} follow directly from \eqref{eq.3.14}, \eqref{eq.3.16} and \eqref{eq.3.22}, respectively. Specifically, the presence of the remainder $O(1/n^2)$ is explained by replacing the terms $n-1$ or $n+1$ in the denominators by $n$ when $n$ is large. It will be shown below that under conditions \eqref{eq.1.20} and \eqref{eq.1.21}, these asymptotic relations can be respectively presented as follows:
\begin{equation}\label{eq.4.40}
\begin{aligned}
  &p[\mathfrak{a},(i,j)]=\frac{1}{2}\left(1-\frac{4\alpha^*_{j-i+1}}{n}\right)p[\mathfrak{a},(i-1, j)]\\
  &\ \ \ +\frac{1}{2}\left(1+\frac{4\alpha^*_{j-i-1}}{n}\right)p[\mathfrak{a},(i, j-1)]\\
  & \ \ \ \ +O\left(\frac{1}{n^2}\right),
\end{aligned}
\end{equation}
\begin{equation}\label{eq.4.41}
\begin{aligned}
  &p[\mathfrak{a},(i,i+1)]=\frac{1}{2}\left(1-\frac{4\alpha^*_{2}}{n}\right)p[\mathfrak{a},(i-1,i+1)]+p[\mathfrak{a},(i,i)]\\
  & \ \ \ \ +O\left(\frac{1}{n^2}\right),
\end{aligned}
\end{equation}
and
\begin{equation}\label{eq.4.42}
\begin{aligned}
  &p[\mathfrak{a},(i,i)]=\left(1-\frac{4\alpha^*_{1}}{n}\right)p[\mathfrak{a},(i-1,i)]+O\left(\frac{1}{n^2}\right),
\end{aligned}
\end{equation}
where in all these asymptotic relations $\alpha^*_{j-i}=\lim_{k\to\infty}\alpha_{(k+i,k+j)}$.

For justification of these asymptotic expansions and other properties, it is convenient to change the measure $p$ as follows:
\begin{equation}\label{eq.4.20}
q[\mathfrak{a}, (i,j)]=\begin{cases}p[\mathfrak{a}, (i,j)], &\text{if} \ i\geq1 \ \text{and} \ i\neq j,\\
2p[\mathfrak{a}, (0,n)], &\text{if} \ i=0  \ \text{and} \ n>0,\\
2p\left[\mathfrak{a}, \left(\frac{n}{2},\frac{n}{2}\right)\right], &\text{if} \ i=j \ (n \ \text{is even}),\\
8p[\mathfrak{a}, (0,0)], &\text{if} \ i=j=0.
\end{cases}
\end{equation}

We demonstrate first that the values $q[\mathfrak{a}, \mathbf{n}]$ can be recurrently obtained from the system of recurrence relations. For this purpose, we need to order the vectors $\mathbf{n}=(n^{(1)},n^{(2)})$ in the way establishing these recurrence relations for $q[\mathfrak{a}, \mathbf{n}]$. First, we rank vectors $\mathbf{n}$ as follows:
\begin{equation}\label{eq.4.19}
\underbrace{(0,0)}_{\text{rank} \ 0}, \underbrace{(0,1)}_{\text{rank} \ 1}, \underbrace{(0,2), (1,1)}_{\text{rank} \ 2}, \underbrace{(0,3), (1,2)}_{\text{rank} \ 3},\ldots.
\end{equation}
So, the rank is the sum of the first and second components of the vector.

Then, the corresponding quantities $\alpha_{\mathbf{n}}$ and $q[\mathfrak{a}, \mathbf{n}]$ are ranked accordingly. For instance,
$$
\underbrace{q[\mathfrak{a},(0,0)]}_{\text{rank} \ 0}, \underbrace{q[\mathfrak{a},(0,1)]}_{\text{rank} \ 1}, \underbrace{q[\mathfrak{a},(0,2)], q[\mathfrak{a}, (1,1)]}_{\text{rank} \ 2}, \underbrace{q[\mathfrak{a},(0,3)], q[\mathfrak{a},(1,2)]}_{\text{rank} \ 3},\ldots.
$$
Note, that the total number of elements of rank $n$ is $1+\lfloor n/2\rfloor$, where $\lfloor n/2\rfloor$ denotes the integer part of $n/2$.
The ranking is helpful to provide the proof of some results by mathematical induction. For instance, we prove that $\sum_{i+j=n,i\leq j}q[\mathfrak{o}, (i,j)]=8(1+\lfloor n/2\rfloor)$.

Indeed, from \eqref{eq.3.26} and \eqref{eq.4.20}  $q[\mathfrak{o}, (0,0)]=q[\mathfrak{o}, (0,1)]=8$. Then, from \eqref{eq.3.24} and \eqref{eq.4.20}
$$
q[\mathfrak{o}, (0,1)]=\frac{1}{4}\bigg[2q[\mathfrak{o}, (0,0)]+q[\mathfrak{o}, (0,2)]+q[\mathfrak{o}, (1,1)]\bigg].
$$
That is,
$$
q[\mathfrak{o}, (0,2)]+q[\mathfrak{o}, (1,1)]=16.
$$
Using mathematical induction, we then assume that for ranks $n-2$ and $n-1$ we have $\sum_{i+j=n-2,i\leq j}q[\mathfrak{o}, (i,j)]=8(1+\lfloor (n-2)/2\rfloor)$ and, respectively,
$\sum_{i+j=n-1,i\leq j}q[\mathfrak{o}, (i,j)]=8(1+\lfloor (n-1)/2\rfloor)$, and using the relations that connect the elements of rank $n$ with the elements of rank $n-1$ and $n-2$ we arrive at the required result by counting the number of elements of rank $n-1$ and $n-2$ in the relations and multiplying by factor 8.

Using the same approach, we are able to derive relationship between
$$
\sum_{\substack{ {i+j=n},\\ {i\leq j}}}q[\mathfrak{a}, (i,j)]
$$
and the sums
$$
\sum_{\substack{i+j=n-1,\\i\leq j}}q[\mathfrak{a}, (i,j)] \quad \text{and} \quad \sum_{\substack{i+j=n-2,\\i\leq j}}q[\mathfrak{a}, (i,j)].
$$

To obtain the stronger results in the form of recurrence relations for $q[\mathfrak{a}, (i,j)]$ that are needed for the purpose of the present paper, we are to establish the order between the pairs $(i,j)$. Specifically, the ranked elements can be ordered as follows. If $n=i+j$ is even, then the suggested order of elements is
$$
q[\mathfrak{a}, (0,i+j)], q[\mathfrak{a}, (1,i+j-1)],\ldots,q\left[\mathfrak{a}, \left(\frac{i+j}{2},\frac{i+j}{2}\right)\right].
$$
That is, the last element has the two equal index components $(i+j)/2$.
If $n$ is odd, then the suggested order is
$$
q[\mathfrak{a}, (0,i+j)], q[\mathfrak{a}, (1,i+j-1)],\ldots,q\left[\mathfrak{a}, \left(\frac{i+j-1}{2},\frac{i+j+1}{2}\right)\right],
$$
where the last element has the two index components $(i+j-1)/2$ and $(i+j+1)/2$ that are distinguished by a unit.

Then, all the elements $q[\mathfrak{a}, (i,j)]$ with $i+j=n$ can be presented in the form of recurrence relation except the marginal element $q[\mathfrak{a}, (0,i+j)]$. For instance, for the presentation of the element $q[\mathfrak{a}, (i,i)]$
we use the relation
\begin{equation*}
\begin{aligned}
q[\mathfrak{a}, (i-1,i)]&=\frac{1}{4}\left(1-\frac{2\alpha_{(i-2,i)}}{i-1}\right)q[\mathfrak{a}, (i-2,i)]\\
+&\frac{1}{4}\left(1+\frac{2\alpha_{(i-1,i+1)}}{i}\right)q[\mathfrak{a}, (i-1,i+1)]\\
+&\frac{1}{4}q[\mathfrak{a}, (i-1,i-1)]+\frac{1}{4}q[\mathfrak{a}, (i,i)]
\end{aligned}
\end{equation*}
(see \eqref{eq.3.16}), where after rearranging the terms $q[\mathfrak{a}, (i,i)]$ is expressed via $q[\mathfrak{a}, (i-1,i+1)]$ of rank $2i$, $q[\mathfrak{a}, (i-1,i)]$ of rank $2i-1$ and $q[\mathfrak{a}, (i-1,i-1)]$ and $q[\mathfrak{a}, (i-2,i)]$ of rank $2i-2$. However, the marginal element $q[\mathfrak{a}, (0, 2i)]$ cannot be presented via the elements of the smaller ranks $2i-1$ and $2i-2$ only. For instance, from \eqref{eq.3.25} and \eqref{eq.4.20} we obtain
\begin{equation*}
\begin{aligned}
q[\mathfrak{a}, (0, 2i-1)]&=\frac{1}{2}\left(1+\frac{2\alpha_{(1,2i-1)}}{i}\right)q[\mathfrak{a}, (1, 2i-1)]\\
+&\frac{1}{4}q[\mathfrak{a}, (0, 2i-2)]
+\frac{1}{4}q[\mathfrak{a}, (0, 2i)].
\end{aligned}
\end{equation*}
That is, after rearranging the elements we can see that the element $q[\mathfrak{a}, (0, 2i)]$ is presented via the subsequent element $q[\mathfrak{a}, (1, 2i-1)]$ of the same rank  and the other two elements $q[\mathfrak{a}, (0, 2i-2)]$ and $q[\mathfrak{a}, (0, 2i-1)]$ of the lower ranks.

Thus, for all elements of rank $i+j$ except the element $q[\mathfrak{a}, (0,i+j)]$, we can derive the system of recurrence relations expressing all elements $q[\mathfrak{a}, (i,j)]$ via the preceding elements. Then, to resolve the problem for $q[\mathfrak{a}, (0,i+j)]$, the additional equation for $\sum_{i+j,i\leq j}q[\mathfrak{a}, (i,j)]$ is used, and hence, all the elements $q[\mathfrak{a}, (i,j)]$ can be uniquely determined from the recursion in the combination with the normalization condition. For instance, using mathematical induction, we now can prove that $q[\mathfrak{o}, (i,j)]\equiv 8$ for all $i\leq j$, confirming thus \eqref{eq.3.27} (or \eqref{eq.3.28}) obtained by the other way in \cite{A}.

Assumption \eqref{eq.1.21} enables us to prove that, as $k\to\infty$, the quantities $q[\mathfrak{a}, (k-l,k+l)]$ (if $n=k-l+k+l=2k$ is even) and $q[\mathfrak{a}, (k-l+1,k+l)$ (if $n=k-l+1+k+l=2k+1$ is odd) converge to the limits. Indeed, as $k\to\infty$, for $q[\mathfrak{a}, (k-l,k+l)]$, $l\geq1$, we have
\begin{equation}\label{eq.3.40}
\begin{aligned}
q[\mathfrak{a},(k-l,k+l)]&=\frac{1}{4}\left(1-\frac{2\alpha_{(k-l-1,k+l)}}{k}\right)\\
&\ \ \ \times q[\mathfrak{a},(k-l-1,k+l)]\\
+&\frac{1}{4}\left(1-\frac{2\alpha_{(k-l,k+l+1)}}{k}\right)\\
&\ \ \ \times q[\mathfrak{a},(k-l,k+l+1)]\\
+&\frac{1}{4}\left(1+\frac{2\alpha_{(k-l+1,k+l)}}{k}\right)\\
&\ \ \ \times q[a,(k-l+1,k+l)]\\
+&\frac{1}{4}\left(1-\frac{2\alpha_{(k-l,k+l-1)}}{k}\right)\\
&\ \ \ \times q[\mathfrak{a},(k-l,k+l-1)]\\
&\ \ \ \ +O\left(\frac{1}{k^2}\right).
\end{aligned}
\end{equation}
Similarly, for $q[\mathfrak{a}, (k-l+1,k+l+1)]$ we have
\begin{equation}\label{eq.3.41}
\begin{aligned}
q[\mathfrak{a},(k-l+1,k+l+1)]&=\frac{1}{4}\left(1-\frac{2\alpha_{(k-l,k+l+1)}}{k}\right)\\
&\ \ \ \times q[\mathfrak{a},(k-l,k+l+1)]\\
+&\frac{1}{4}\left(1-\frac{2\alpha_{(k-l+1,k+l+2)}}{k}\right)\\
&\ \ \ \times q[\mathfrak{a},(k-l+1,k+l+2)]\\
+&\frac{1}{4}\left(1+\frac{2\alpha_{(k-l+2,k+l+1)}}{k}\right)\\
&\ \ \ \times q[\mathfrak{a},(k-l+2,k+l+1)]\\
+&\frac{1}{4}\left(1-\frac{2\alpha_{(k-l+1,k+l)}}{k}\right)\\
&\ \ \ \times q[\mathfrak{a},(k-l+1,k+l)]\\
&\ \ \ \ +O\left(\frac{1}{k^2}\right).
\end{aligned}
\end{equation}
According to assumption \eqref{eq.1.21}, the differences between the coefficients $\alpha_{(k-l-1,k+l)}$, $\alpha_{(k-l,k+l+1)}$, $\alpha_{(k-l+1,k+l)}$, $\alpha_{(k-l,k+l-1)}$ in \eqref{eq.3.40} and the corresponding coefficients $\alpha_{(k-l,k+l+1)}$, $\alpha_{(k-l+1,k+l+2)}$, $\alpha_{(k-l+2,k+l+1)}$, $\alpha_{(k-l+1,k+l)}$ in \eqref{eq.3.41} vanish, and
$$
|\alpha_{(k-l,k+l+1)}-\alpha_{(k-l-1,k+l)}|\leq\gamma|\alpha_{(k-l-1,k+l)}-\alpha_{(k-l-2,k+l-1)}|,
$$
and
$$
|\alpha_{(k-l+2,k+l+1)}-\alpha_{(k-l+1,k+l)}|\leq\gamma|\alpha_{(k-l+1,k+l)}-\alpha_{(k-l,k+l-1)}|.
$$
As well,
there is the recurrence relation connecting $q[\mathfrak{a},(k-l+1,k+l+1)]$ and $q[\mathfrak{a},(k-l,k+l)]$. Although the recurrence relation is indirect, it satisfies the required properties that enable us to use the fixed point solution. The fixed point solution enables us to justify the inequality
\begin{equation}\label{eq.3.42}
\begin{aligned}
&|q[\mathfrak{a}, (k-l+1,k+l+1)]-q[\mathfrak{a}, (k-l,k+l)]|\\
&\leq\gamma|q[\mathfrak{a}, (k-l,k+l)]-q[\mathfrak{a}, (k-l-1,k+l-1)]|
\end{aligned}
\end{equation}
and, hence, the convergence of $q[\mathfrak{a}, (k-l,k+l)]$ to the limit as $k\to\infty$. Moreover, owing to monotonicity \eqref{eq.3.42}, together with the convergence of $q[\mathfrak{a}, (k-l,k+l)]$ as $k\to\infty$ we also arrive at the estimate
\begin{equation*}
q[\mathfrak{a}, (k-l+1,k+l+1)]-q[\mathfrak{a}, (k-l,k+l)]=O\left(\gamma^{2k}\right),
\end{equation*}
which justifies asymptotic expansion \eqref{eq.4.40}. Asymptotic expansions \eqref{eq.4.41} and \eqref{eq.4.42} are established similarly.

\subsection{Lemmas} Let $(i, j)$ be a pair of integers, $i\leq j$ and $i+j=n$.
If $n$ is even ($n=2k$, where $k$ is a positive integer), then $i$ and $j$ can be presented as $i=k-l+1$ and $j=k+l-1$. If $n$ is odd ($n=2k+1$), then $i$ and $j$ can be presented as $i=k-l+1$ and $j=k+l$. (In both cases $l$ is assumed to be a positive integer.) In the lemma below we derive the asymptotic representations for $q[\mathfrak{a}, (k-l, k+l)]$ via $q[\mathfrak{a}, (k-l+1, k+l)]$ ($1\leq l\leq k-1$) and for $q[\mathfrak{a}, (k-l+1, k+l)]$ via $q[\mathfrak{a}, (k-l+1, k+l-1)]$ ($2\leq l\leq k$).

\begin{lem}\label{L1} As $k\to\infty$, we have the following asymptotic expansions. For $1\leq l\leq k-1$,
\begin{equation}\label{eq.4.70}
\begin{aligned}
q[\mathfrak{a}, (k-l,k+l)]&=\left(1+\frac{2\alpha^*_{2l-1}}{k}+\frac{2\alpha^*_{2l}}{k}\right)\\
\times&q[\mathfrak{a}, (k-l+1,k+l)]+O\left(\frac{1}{k^2}\right),
\end{aligned}
\end{equation}
and for $2\leq l\leq k$,
\begin{equation}\label{eq.4.71}
\begin{aligned}
q[\mathfrak{a}, (k-l+1,k+l)]&=\left(1+\frac{2\alpha^*_{2l-2}}{k}+\frac{2\alpha^*_{2l-1}}{k}\right)\\
\times&q[\mathfrak{a}, (k-l+1,k+l-1)]+O\left(\frac{1}{k^2}\right).
\end{aligned}
\end{equation}
\end{lem}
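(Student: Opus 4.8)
The plan is to fold the two asserted one-step estimates into a single scalar three-term recursion running along the "staircase'' of states from the diagonal towards the axis, verify that the product Ansatz suggested by the lemma solves it, and then upgrade self-consistency to a genuine uniform estimate by a fixed-point argument.

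First I would pass to the measure $q$: for the states occurring in \eqref{eq.4.70}--\eqref{eq.4.71} with $l$ in the stated ranges the change \eqref{eq.4.20} is trivial ($q=p$), while the diagonal and axis states carrying the factors $2$ or $8$ are governed by \eqref{eq.4.41}--\eqref{eq.4.42}, so I may use the reduced interior relation \eqref{eq.4.40}. Write $u_l=q[a,(k-l,k+l)]$ and $v_l=q[a,(k-l+1,k+l)]$. Applying \eqref{eq.4.40} at $(k-l+1,k+l)$ and noting $(k-l+1,k+l-1)=(k-(l-1),k+(l-1))$ gives the closed relation
\[
v_l=\tfrac12\Big(1-\tfrac{2\alpha^*_{2l}}{k}\Big)u_l+\tfrac12\Big(1+\tfrac{2\alpha^*_{2l-2}}{k}\Big)u_{l-1}+O(1/k^2),
\]
and applying \eqref{eq.4.40} at $(k-l,k+l)$ expresses $u_l$ through the two rank-$(2k-1)$ states $q[a,(k-l-1,k+l)]$ and $q[a,(k-l,k+l-1)]$; since each differs from $v_{l+1}$, resp.\ $v_l$, by the diagonal vector $\mathbf 1$, the geometric contraction \eqref{eq.3.42} lets me substitute, at the price of an error geometrically small in $k-l$:
\[
u_l=\tfrac12\Big(1-\tfrac{2\alpha^*_{2l+1}}{k}\Big)v_{l+1}+\tfrac12\Big(1+\tfrac{2\alpha^*_{2l-1}}{k}\Big)v_l+O(1/k^2)+O(\gamma^{2(k-l)}).
\]

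Next I interleave these into one sequence, $x_{2l}=u_l$ and $x_{2l+1}=v_{l+1}$ (so $x_0=q[a,(k,k)]$, $x_1=q[a,(k-1,k)]$, $x_2=q[a,(k-1,k+1)],\dots$); the two displays above then merge into one recursion valid for all $m$,
\[
x_m=\tfrac12\Big(1-\tfrac{2\alpha^*_{m+1}}{k}\Big)x_{m+1}+\tfrac12\Big(1+\tfrac{2\alpha^*_{m-1}}{k}\Big)x_{m-1}+O(1/k^2)+O(\gamma^{2k-m}),
\]
and the pair \eqref{eq.4.70}--\eqref{eq.4.71} is precisely equivalent to $x_m=\big(1+\tfrac{2\alpha^*_{m-1}+2\alpha^*_m}{k}\big)x_{m-1}+O(1/k^2)$, uniformly in $m$. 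I would then check by direct substitution (using $\alpha^*_0=0$, the uniform boundedness of the $q[a,\cdot]$ from \S\ref{S4.1}, and the telescoping of $\sum_j(\alpha^*_{j+1}-\alpha^*_{j-1})$) that the product Ansatz $\widehat x_m=\prod_{j=1}^m\big(1+\tfrac{2\alpha^*_{j-1}+2\alpha^*_j}{k}\big)x_0$ satisfies the three-term recursion up to $O(1/k^2)$, and that the two base values match, since \eqref{eq.4.42}, read through \eqref{eq.4.20}, yields exactly $x_1=(1+\tfrac{2\alpha^*_1}{k})x_0+O(1/k^2)$.

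The last and hardest step is to show that the actual solution $x_m$ coincides with $\widehat x_m$ up to $O(1/k^2)$ uniformly, not merely that the Ansatz is self-consistent. Setting $x_m=\widehat x_m+r_m$, the residual solves the homogeneous three-term recursion (a discrete Laplace-type operator with $O(1/k)$-perturbed coefficients) with inhomogeneity $O(1/k^2)+O(\gamma^{2k-m})$, with $r_0=0$ and $r_1=O(1/k^2)$ at the diagonal end, and with $r$ at the axis end controlled through the exact equations \eqref{eq.3.24}--\eqref{eq.3.25} and the known value of $\sum_{i+j=n}q[a,(i,j)]$. Since this is a two-point boundary-value problem in $m$ over $\Theta(k)$ indices, one cannot simply iterate once; the obstacle is to run a discrete maximum-principle / contraction (fixed-point) estimate — as already foreshadowed around \eqref{eq.3.42} in \S\ref{S4.1} — controlling the accumulation of the $O(1/k^2)$ residuals so the total stays $O(1/k^2)$, and to treat the $O(\log k)$ states closest to the axis separately and directly from \eqref{eq.3.22}, \eqref{eq.3.24}, \eqref{eq.3.25}, where the term $O(\gamma^{2k-m})$ is no longer negligible. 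Once $r_m=O(1/k^2)$ is established uniformly, reading off even and odd $m$ gives \eqref{eq.4.70} and \eqref{eq.4.71}.
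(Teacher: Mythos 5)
Your setup largely reproduces the paper's preliminaries: passing to the measure $q$, using the reduced relations \eqref{eq.4.40}--\eqref{eq.4.42} together with the contraction \eqref{eq.3.42} to identify states differing by $\mathbf{1}$, anchoring at the diagonal (where \eqref{eq.4.41}--\eqref{eq.4.42} do give $x_1=(1+2\alpha^*_1/k)x_0+O(1/k^2)$, exactly as in \eqref{eq.4.52}--\eqref{eq.4.54}), and recognizing that the lemma is precisely the one-step relation for the interleaved chain (minor slip: with your interleaving the odd entries are $q[a,(k,k+1)],q[a,(k-1,k+2)],\dots$, not $q[a,(k-1,k)]$). Your observation that for states within a bounded or $O(\log k)$ distance of the axis the $O(\gamma^{k-l})$ errors are not negligible is a fair caveat which the paper glosses over.

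The genuine gap is the step you yourself flag as the hardest: the uniform $O(1/k^2)$ bound on the residual $r_m$ is never established, and the tool you propose cannot deliver it. Your three-term recursion is a discrete second-derivative operator with $O(1/k)$ perturbations; its homogeneous solutions contain a linearly growing mode, so an $O(1/k^2)$ mismatch in the data at $m=0,1$ alone propagates to $O(m/k^2)=O(1/k)$ by $m\sim k$, while per-site forcing of size $O(1/k^2)$ tested against the Dirichlet Green's function of an interval of length $\Theta(k)$ gives $O(k^2)\cdot O(1/k^2)=O(1)$; a discrete maximum-principle or comparison argument reproduces exactly these bounds and nothing better, so the claimed uniform $O(1/k^2)$ residual does not follow from the route as described. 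The paper never confronts this second-order boundary-value problem: it proves \eqref{eq.4.70}--\eqref{eq.4.71} by induction on $l$, marching away from the diagonal, and at each step substitutes the already-proved one-step relation \eqref{eq.4.72} into the reduced relation \eqref{eq.4.74} to eliminate one of the two higher-rank neighbours and then solves for the remaining one. In other words, it converts the three-term recursion into a stable first-order recursion with coefficients $1+O(1/k)$, which is exactly the mechanism missing from your plan (and the reason "one cannot simply iterate" disappears). Your legitimate worry about accumulation does not vanish entirely under the induction either -- tracked honestly, the per-step constants grow linearly in $l$, which only matters near the axis and is harmless for the use made of the lemma in Theorem \ref{T1} -- but to prove the lemma you should replace the global Ansatz-plus-maximum-principle step by the eliminate-and-solve induction; your interleaved relations already contain all the ingredients for it.
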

\begin{proof}
The proofs of \eqref{eq.4.70}, \eqref{eq.4.71} are based on mathematical induction. Hence, the first task is to prove the result for the initial value $l=1$ for \eqref{eq.4.70} and for the initial value $l=2$ for \eqref{eq.4.71}.

Using the measure $q$ we have as follows. From \eqref{eq.4.42} we have the expansion
\begin{equation}\label{eq.4.52}
q[\mathfrak{a}, (k,k)]=\left(1-\frac{2\alpha^*_1}{k}\right)q[\mathfrak{a}, (k,k+1)]+O\left(\frac{1}{k^2}\right),
\end{equation}
and from \eqref{eq.4.41} we have the expansion
\begin{equation}\label{eq.4.53}
\begin{aligned}
q[\mathfrak{a}, (k,k+1)]&=\frac{1}{2}\left(1-\frac{2\alpha^*_{2}}{k}\right)q[\mathfrak{a},(k-1,k+1)]\\
+&\frac{1}{2}q[\mathfrak{a}, (k,k)]+
O\left(\frac{1}{k^2}\right).
\end{aligned}
\end{equation}
From \eqref{eq.4.52} and \eqref{eq.4.53} we obtain
\begin{equation}\label{eq.4.54}
q[\mathfrak{a},(k-1,k+1)]\left(1+\frac{2\alpha_1^*}{k}+\frac{2\alpha_2^*}{k}\right)q[\mathfrak{a},(k,k+1)]+O\left(\frac{1}{k^2}\right).
\end{equation}
So, \eqref{eq.4.70} for $l=1$ follows. The proof of \eqref{eq.4.71} for $l=2$ follows by a similar way using expansion \eqref{eq.4.40}, and we do not present the details.

Assume that it is shown the justice of expansions \eqref{eq.4.70} and \eqref{eq.4.71} for a specific value of $l=l_0$. That is,
\begin{equation}\label{eq.4.72}
\begin{aligned}
q[\mathfrak{a}, (k-l_0,k+l_0)]&=\left(1+\frac{2\alpha^*_{2l_0-1}}{k}+\frac{2\alpha^*_{2l_0}}{k}\right)\\
\times&q[\mathfrak{a}, (k-l_0+1,k+l_0)]+O\left(\frac{1}{k^2}\right),
\end{aligned}
\end{equation}
\begin{equation*}\label{eq.4.73}
\begin{aligned}
q[\mathfrak{a}, (k-l_0+1,k+l_0)]&=\left(1+\frac{2\alpha^*_{2l_0-2}}{k}+\frac{2\alpha^*_{2l_0-1}}{k}\right)\\
\times&q[\mathfrak{a}, (k-l_0+1,k+l_0-1)]+O\left(\frac{1}{k^2}\right).
\end{aligned}
\end{equation*}
Prove these expansions for $l=l_0+1$. We have
\begin{equation}\label{eq.4.74}
\begin{aligned}
&q[\mathfrak{a}, (k-l_0,k+l_0)]\\
&=\frac{1}{2}\left(1-\frac{2\alpha^*_{2l_0+1}}{k}\right)q[\mathfrak{a}, (k-l_0,k+l_0+1)]\\
&\ \ \ +\frac{1}{2}\left(1+\frac{2\alpha^*_{2l_0-1}}{k}\right)q[\mathfrak{a}, (k-l_0+1,k+l_0)]+O\left(\frac{1}{k^2}\right).
\end{aligned}
\end{equation}
Taking into account that \eqref{eq.4.72} can be rewritten in the form
$$
\begin{aligned}
q[\mathfrak{a}, (k-l_0+1,k+l_0)]&=\left(1-\frac{2\alpha^*_{2l_0-1}}{k}-\frac{2\alpha^*_{2l_0}}{k}\right)\\
\times&q[\mathfrak{a}, (k-l_0,k+l_0)]+O\left(\frac{1}{k^2}\right).
\end{aligned}
$$
and substituting it into \eqref{eq.4.74} we obtain the asymptotic expression between the terms $q[\mathfrak{a}, (k-l_0,k+l_0+1)]$ and $q[\mathfrak{a}, (k-l_0,k+l_0)]$, which after algebra can be presented as
$$
\begin{aligned}
q[\mathfrak{a}, (k-l_0,k+l_0+1)]&=\left(1+\frac{2\alpha^*_{2l_0}}{k}+\frac{2\alpha^*_{2l_0+1}}{k}\right)\\
\times&q[\mathfrak{a}, (k-l_0,k+l_0)]+O\left(\frac{1}{k^2}\right).
\end{aligned}
$$
So, \eqref{eq.4.71} is proved. The proof of \eqref{eq.4.70} is similar.
\end{proof}

\begin{cor}\label{C1} As $k\to\infty$, we have the following asymptotic expansions. For $2\leq l\leq k-1$,
\begin{equation}\label{eq.4.50}
\begin{aligned}
&q[\mathfrak{a}, (k-l,k+l)]=
\left(1+\frac{2\alpha^*_{2l-2}}{k}
+\frac{4\alpha^*_{2l-1}}{k}+\frac{2\alpha^*_{2l}}{k}\right)\\
&\ \ \times q[\mathfrak{a}, (k-l+1,k+l-1)]+O\left(\frac{1}{k^2}\right),
\end{aligned}
\end{equation}
and for $1\leq l\leq k-1$
\begin{equation}\label{eq.4.51}
\begin{aligned}
&q[\mathfrak{a},(k-l,k+l+1)]=\left(1+\frac{2\alpha^*_{2l-1}}{k}+\frac{4\alpha^*_{2l}}{k}
+\frac{2\alpha^*_{2l+1}}{k}\right)\\
&\ \ \times q[\mathfrak{a}, (k-l+1,k+l)]+O\left(\frac{1}{k^2}\right).
\end{aligned}
\end{equation}
\end{cor}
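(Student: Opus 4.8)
The plan is to derive Corollary \ref{C1} directly from Lemma \ref{L1} by composing its two asymptotic expansions and linearising the product of the two correction factors; no new analytic input is required.

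For \eqref{eq.4.50} I would proceed as follows. On the overlapping range $2\le l\le k-1$ both \eqref{eq.4.70} and \eqref{eq.4.71} are available, and I substitute \eqref{eq.4.71}, namely
\[
q[a,(k-l+1,k+l)]=\left(1+\frac{2\alpha^*_{2l-2}}{k}+\frac{2\alpha^*_{2l-1}}{k}\right)q[a,(k-l+1,k+l-1)]+O\!\left(\frac{1}{k^2}\right),
\]
into the right-hand side of \eqref{eq.4.70}. Writing the two correction factors as $1+c_1/k$ and $1+c_2/k$ with $c_1=2\alpha^*_{2l-1}+2\alpha^*_{2l}$ and $c_2=2\alpha^*_{2l-2}+2\alpha^*_{2l-1}$, their product is $1+(c_1+c_2)/k+c_1c_2/k^2$; since $|\alpha^*_m|\le C$ for every $m$ by \eqref{eq.1.20}, the term $c_1c_2/k^2$ is $O(1/k^2)$, and multiplying the two $O(1/k^2)$ remainders by the quantities $q[a,\mathbf{n}]$, which are bounded (being convergent as $k\to\infty$ by the discussion in Section \ref{S4.1}), also leaves them $O(1/k^2)$. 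Then $c_1+c_2=2\alpha^*_{2l-2}+4\alpha^*_{2l-1}+2\alpha^*_{2l}$, which is exactly the coefficient in \eqref{eq.4.50}.

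For \eqref{eq.4.51} the only extra step is a relabelling: the pair $(k-l,k+l+1)$ has the form $(k'-l'+1,k'+l')$ with $k'=k$ and $l'=l+1$, so \eqref{eq.4.71} applied with these indices reads, for $1\le l\le k-1$,
\[
q[a,(k-l,k+l+1)]=\left(1+\frac{2\alpha^*_{2l}}{k}+\frac{2\alpha^*_{2l+1}}{k}\right)q[a,(k-l,k+l)]+O\!\left(\frac{1}{k^2}\right).
\]
Substituting \eqref{eq.4.70} for $q[a,(k-l,k+l)]$ and linearising the product of the correction factors exactly as before produces the coefficient $(2\alpha^*_{2l}+2\alpha^*_{2l+1})+(2\alpha^*_{2l-1}+2\alpha^*_{2l})=2\alpha^*_{2l-1}+4\alpha^*_{2l}+2\alpha^*_{2l+1}$, which is the coefficient in \eqref{eq.4.51}.

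The two things I would be careful about — both routine — are the bookkeeping of the index ranges (the intersection of the domains of \eqref{eq.4.70} and \eqref{eq.4.71}, after the relabelling, is $2\le l\le k-1$ for the first expansion and $1\le l\le k-1$ for the second, matching the statement), and the verification that the error terms remain $O(1/k^2)$ after multiplication, which rests on the uniform bound $|\alpha^*_m|\le C$ from \eqref{eq.1.20} together with the uniform boundedness of $q[a,\mathbf{n}]$ from Section \ref{S4.1}. I do not anticipate any genuine obstacle: the content of the corollary is entirely contained in Lemma \ref{L1}.
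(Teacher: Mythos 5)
Your proposal is correct and is exactly the paper's route: the paper proves Corollary \ref{C1} by noting it "follows immediately from Lemma \ref{L1}", i.e.\ by the same composition of \eqref{eq.4.70} with \eqref{eq.4.71} (after the index shift for \eqref{eq.4.51}) and absorption of the cross term into $O(1/k^2)$ that you carry out. Your explicit bookkeeping of the index ranges and of the boundedness needed to keep the remainders at $O(1/k^2)$ only spells out what the paper leaves implicit.
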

\begin{proof}
The proof of Corollary \ref{C1} follows immediately from Lemma \ref{L1}.
\end{proof}

The next corollary provides estimates for $q[\mathfrak{a}, (0,2k)]$ and $q[\mathfrak{a}, (0,2k+1)]$ as $k\to\infty$.
\begin{cor}\label{C2}
As $k\to\infty$, the following estimates hold
\begin{eqnarray}
q[\mathfrak{a}, (0,2k)]&=&q[\mathfrak{a}, (1,2k-1)]+O\left(\frac{1}{k}\right),\label{eq.4.55}\\
q[\mathfrak{a}, (0,2k+1)]&=&q[\mathfrak{a}, (1,2k)]+O\left(\frac{1}{k}\right),\label{eq.4.56}
\end{eqnarray}
\end{cor}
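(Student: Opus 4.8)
The plan is to extract from the balance equation at the marginal states $(0,n)$ a second-order recurrence for the sequence $\beta_n:=q[a,(0,n)]$ and to read the estimate off the structure of that recurrence. Rewriting \eqref{eq.3.25} (and \eqref{eq.3.24} at the lowest ranks) in the measure $q$ via \eqref{eq.4.20}, one obtains, for all large $n$,
\[
\beta_{n+1}-4\beta_n+\beta_{n-1}=-2\,q[a,(1,n)]+O\!\left(\tfrac1n\right)=:s_n,
\]
where the error term absorbs the coefficient $8\alpha_{(1,n)}/(n+1)$ multiplying $q[a,(1,n)]$, using that $q[a,(1,n)]$ is bounded (indeed it converges, by Lemma \ref{L1} and Corollary \ref{C1}); the source $s_n$ is bounded for the same reason.

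Next I would analyse this recurrence. Its homogeneous part $\lambda^2-4\lambda+1=0$ has roots $2-\sqrt3\in(0,1)$ and $2+\sqrt3>1$, so every solution equals $c_+(2+\sqrt3)^n+c_-(2-\sqrt3)^n+\sum_m G(n,m)s_m$, where $G(n,m)=-\tfrac1{2\sqrt3}(2-\sqrt3)^{|n-m|}$ is the bounded decaying Green's function and the particular sum is bounded because $s$ is (the finitely many low-rank rows where the recurrence is modified contribute only an exponentially small transient). Since $\beta_n=q[a,(0,n)]\ge0$ by \eqref{eq.3.30} and $0\le\beta_n\le\sum_{i+j=n}q[a,(i,j)]$, which is of order $n$ by the rank-sum relation of Section \ref{S4.1}, the growing mode must be absent, $c_+=0$, whence
\[
q[a,(0,n)]=\frac1{\sqrt3}\sum_{m}(2-\sqrt3)^{|n-m|}q[a,(1,m)]+O\!\left(\tfrac1n\right).
\]

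To evaluate this average I would use that $m\mapsto q[a,(1,m)]$ is slowly varying: its consecutive differences are $O(1/m)$, which one checks by chaining Lemma \ref{L1}, Corollary \ref{C1} and the expansions \eqref{eq.4.41}, \eqref{eq.4.42} (each relation contributes a factor $1+O(1/m)$, and the $q$'s are bounded). Writing $q[a,(1,m)]=q[a,(1,n-1)]+r_m$ with $|r_m|\le C(1+|m-n|)/n$ on the window $n/2\le m\le2n$ and $|r_m|=O(1)$ outside it, and using the identities $\sum_m(2-\sqrt3)^{|n-m|}=\sqrt3$ and $\sum_m(2-\sqrt3)^{|n-m|}|m-n|=O(1)$, the average equals $q[a,(1,n-1)]+O(1/n)$. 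Hence $q[a,(0,n)]=q[a,(1,n-1)]+O(1/n)$, and specializing to $n=2k$ and $n=2k+1$ gives \eqref{eq.4.55} and \eqref{eq.4.56}.

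The crux is the second step. Run forward, the recurrence multiplies any error by the unstable factor $2+\sqrt3>1$, so a direct induction in $n$ is hopeless — which is exactly why the marginal values $q[a,(0,n)]$, unlike the interior ones, are not reachable by Lemma \ref{L1}/Corollary \ref{C1} alone and need this separate argument. The way out is that a bounded solution of the recurrence is unique up to the exponentially decaying homogeneous mode, here forced by the positivity of $q[a,(0,n)]$ together with the at-most-linear growth of the rank sums; and the reason no constant-order error survives the averaging is the exact identity $\tfrac1{\sqrt3}\sum_m(2-\sqrt3)^{|n-m|}=1$, which must be used in place of the triangle inequality (the latter would only yield the useless constant $2/(\sqrt3-1)>1$).
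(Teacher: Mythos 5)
Your recurrence $\beta_{n+1}-4\beta_n+\beta_{n-1}=-2q[a,(1,n)]+O(1/n)$ does follow from \eqref{eq.3.25} after the change of measure \eqref{eq.4.20}, the Green's function $-\tfrac{1}{2\sqrt{3}}(2-\sqrt{3})^{|n-m|}$ is the right inverse, the normalization $\tfrac{1}{\sqrt{3}}\sum_m(2-\sqrt{3})^{|n-m|}=1$ is exact, and the slow-variation input ($q[a,(1,m+1)]-q[a,(1,m)]=O(1/m)$, $q[a,(1,m)]$ bounded) is indeed available by chaining Lemma \ref{L1} and Corollary \ref{C1} (boundedness is what you need; your parenthetical claim that $q[a,(1,n)]$ \emph{converges} is not established, since the convergence proved in Section \ref{S4.1} is at fixed distance from the diagonal). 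The genuine gap is the step $c_+=0$. You discharge it by the bound $q[a,(0,n)]\le\sum_{i+j=n}q[a,(i,j)]=O(n)$ attributed to ``the rank-sum relation of Section \ref{S4.1}'', but that linear rank-sum identity is proved there only for $a=\theta$; for general $a$ the paper merely notes that consecutive rank sums are related, and the $O(k)$ size of the rank sums for general $a$ is obtained only inside the proof of Theorem \ref{T1} (see \eqref{eq.5.4}), where the relative frequency $1+c/k$ assigned to the marginal state $(0,2k)$ is taken from Corollary \ref{C2} itself. So, relative to what is available before the corollary, your appeal is circular, and the step cannot be dropped: the growing mode $(2+\sqrt{3})^n$ is positive, so positivity of $q$ does not exclude it; you genuinely need some a priori estimate $q[a,(0,n)]=o\big((2+\sqrt{3})^n\big)$, and your write-up supplies none.

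The gap is patchable, but only by an argument you have not given — for instance a flux balance across the level $\|\mathbf{n}\|=n$ (up-rates at most $3$, down-rates at least $1-O(1/n)$ in the scaled units) yields $\Sigma_{n+1}\le 3\,(1+O(1/n))\,\Sigma_n$ for the rank sums, hence $\beta_n=O\big((3+\varepsilon)^n\big)=o\big((2+\sqrt{3})^n\big)$, which kills $c_+$. It is worth noting that the paper's own proof sidesteps the entire difficulty: it writes the balance equation at the \emph{interior} state $(1,2k)$ in the $q$-measure (\eqref{eq.3.18}, giving \eqref{eq.4.57}), in which $q[a,(0,2k)]$ enters with coefficient $1/4$ and the remaining three entries are interior values already controlled by Lemma \ref{L1} and Corollary \ref{C1}; solving that single equation for $q[a,(0,2k)]$ gives \eqref{eq.4.55} (and boundedness of the marginal values as a byproduct) in a few lines, with no unstable recursion in $n$ and no a priori growth bound. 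Your route, once the missing bound is supplied, is correct but considerably heavier than what the statement requires.
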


\begin{proof}
Prove \eqref{eq.4.55}. For large $k$, the following expansion follows immediately from \eqref{eq.3.18}
\begin{equation}\label{eq.4.57}
\begin{aligned}
q[\mathfrak{a}, (1,2k)]&=\frac{1}{4}\bigg(q[\mathfrak{a}, (0,2k)]+q[\mathfrak{a}, (2,2k)]\\
+&q[\mathfrak{a}, (1,2k+1)]+q[\mathfrak{a}, (1,2k-1)]\bigg)+O\left(\frac{1}{k}\right).
\end{aligned}
\end{equation}
From Lemma \ref{L1} and Corollary \ref{C1} we have the following estimates:
\begin{eqnarray*}
  q[\mathfrak{a}, (1,2k)] &=& q[\mathfrak{a}, (1,2k-1)]+O\left(\frac{1}{k}\right), \\
  q[\mathfrak{a}, (2,2k)] &=& q[\mathfrak{a}, (1,2k-1)]+O\left(\frac{1}{k}\right), \\
  q[\mathfrak{a}, (1,2k+1)] &=& q[\mathfrak{a}, (1,2k)]+O\left(\frac{1}{k}\right).
\end{eqnarray*}
Substitution of these estimates into \eqref{eq.4.57} and rearranging the terms yields \eqref{eq.4.55}. The proof of \eqref{eq.4.56} is similar.
\end{proof}

The following lemma is an analogue of \cite[Theorem 3]{A3} proved earlier for birth-and-death processes.
Recall that the $K$th iterate of natural logarithm is denoted by $\ln_{(K)}x$, where $\ln_{(1)}x=\ln x$ and for any $ k\geq 2$, $\ln_{(k)}x=\ln_{(k-1)}(\ln x)$.

\begin{lem}\label{L3} Let
\begin{eqnarray*}
\lambda_n(\mathfrak{a})&=&P_0(\mathfrak{a}; n+1|n);\\
\mu_n(\mathfrak{a})&=&P_0(\mathfrak{a}; n-1|n).
\end{eqnarray*}
The random walk $\mathbf{S}_t(\mathfrak{a})$ is transient if there exist $c>1$, $K\geq1$ and a number $n_0$ such that for all $n>n_0$
\begin{equation*}
\frac{\lambda_n(\mathfrak{a})}{\mu_n(\mathfrak{a})}\geq1+\frac{1}{n}+\sum_{k=1}^{K-1}\frac{1}{n\prod_{j=1}^{k}\ln_{(j)}n}+\frac{c}{n\prod_{k=1}^{K}\ln_{(k)}n},
\end{equation*}
and is recurrent if there exists $K\geq1$ and a number $n_0$ such that for all $n>n_0$
\begin{equation*}
\frac{\lambda_n(\mathfrak{a})}{\mu_n(\mathfrak{a})}\leq1+\frac{1}{n}+\sum_{k=1}^{K}\frac{1}{n\prod_{j=1}^{k}\ln_{(j)}n}.
\end{equation*}
\end{lem}
The proof of Lemma \ref{L3} in turn is based on the following lemma.
\begin{lem}\label{L4} The random walk $\mathbf{S}_t(\mathfrak{a})$ is recurrent if and only if
\begin{equation}\label{eq.4.58}
\sum_{k=1}^{\infty}\prod_{j=1}^{k}\frac{\mu_j(\mathfrak{a})}{\lambda_j(\mathfrak{a})}=\infty.
\end{equation}
\end{lem}
\begin{proof}
The parameters $\lambda_n(\mathfrak{a})$ and $\mu_n(\mathfrak{a})$ describe the behavior of the process $\|\mathbf{S}_t\|$ that is not Markov. We will show below that the original process $\|\mathbf{S}_t\|$ behaves similarly as the birth-and-death process with the birth and death rates $\lambda_n(\mathfrak{a})$ and $\mu_n(\mathfrak{a})$ that coincide with the probabilities $P_0(\mathfrak{a}; n+1|n)$ and $P_0(\mathfrak{a}; n-1|n)$, respectively. We prove this lemma by the method of \cite{A}. The method is alternate to the known method of Lamperti \cite{Lamperti1} (see also \cite{M_et_all}).

Consider the series of random walks $\mathbf{S}_t(\mathfrak{a},N)$, $N=1,2,\ldots$, where the parameter $N$ characterizes the maximally possible value of $I_{t}+J_{t}$ (the notation for $I_t$ and $J_t$ is given in Section \ref{S1.2}). Specifically, if $I_{t}+J_t<N$, then the random walk behaves as the original random walk described by \eqref{eq.1.4} -- \eqref{eq.1.13}. However, if $I_{t}+J_{t}=N$, then it is assumed that $I_{t+1}+J_{t+1}=N-1$. More specifically,
\begin{eqnarray*}
\begin{aligned}
\mathsf{P}\{I_{t+1}=I_t, J_{t+1}=J_t-1~|~N_t=N, I_t>0\}
&=&\frac{1}{2}+\frac{2\alpha_{I_t,J_t}}{N},\\
\mathsf{P}\{I_{t+1}=I_t-1, J_{t+1}=J_t~|~N_t=N, I_t>0\}
&=&\frac{1}{2}-\frac{2\alpha_{I_t,J_t}}{N},
\end{aligned}
\end{eqnarray*}
\begin{equation*}
\mathsf{P}\{J_{t+1}=J_t-1~|~N_t=N, I_t=0\}
=1.
\end{equation*}

A random walk with a fixed boundary value $N$ is shown in the figure below. Here in the figure $\alpha_{n}$ stands for $\alpha_{\mathbf{n}}$ with $\|\mathbf{n}\|<N$, and $\alpha_{m}$ stands for $\alpha_{\mathbf{m}}$ with $\|\mathbf{m}\|=N$.
\begin{figure}
\includegraphics[width=10cm, height=14cm]{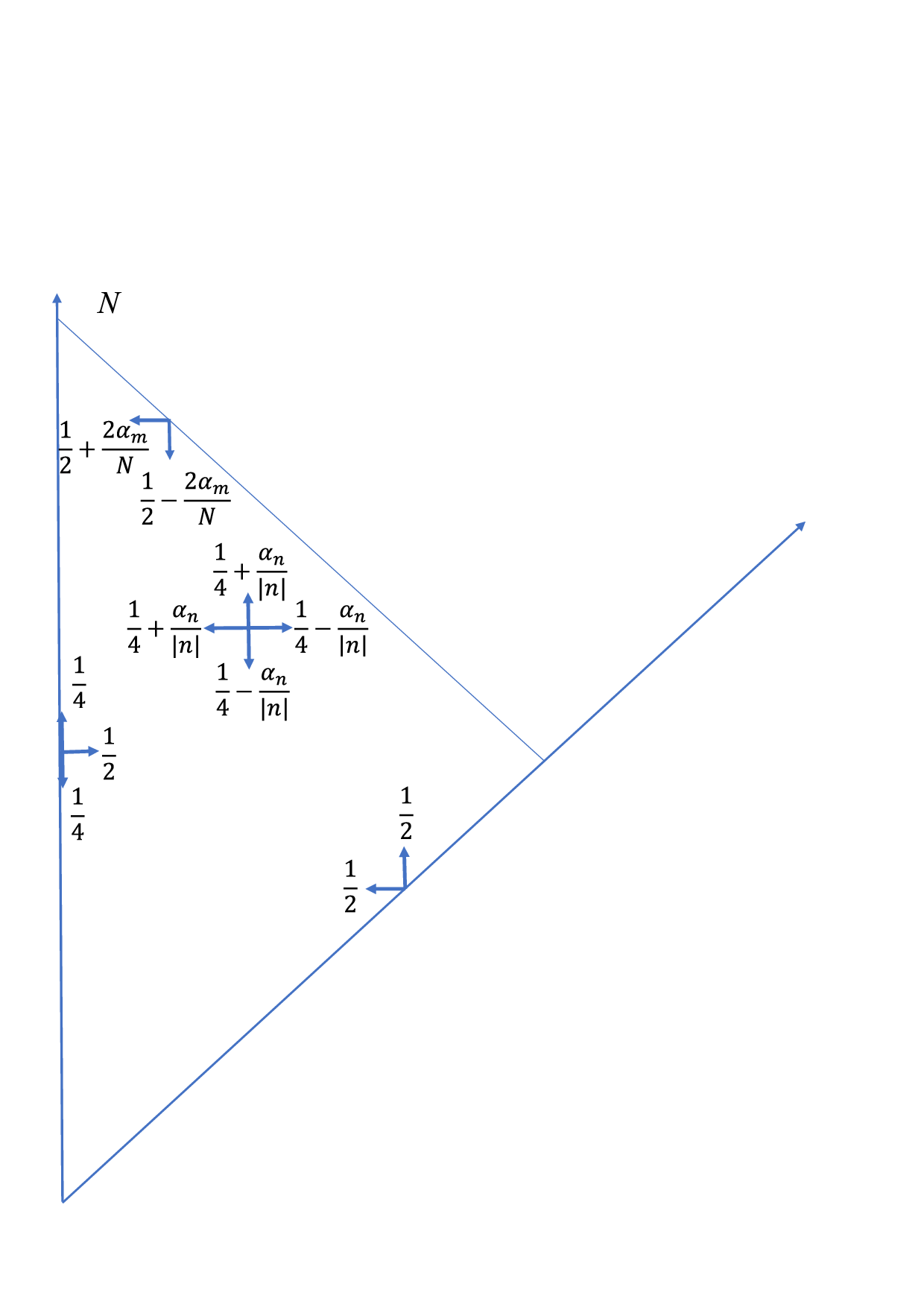}
\caption{Transition probabilities for a random walk in a triangle.}
\end{figure}

With fixed $N$, we derive the system of equations similar to that given in Section \ref{S3} for $P_\tau(\mathfrak{a},\mathbf{n})$, and then define the measure $p(\mathfrak{a},\mathbf{n},N)$ by the formula
\begin{equation}\label{eq.4.60}
p(\mathfrak{a},\mathbf{n},N)=\lim_{\tau\to\infty}\frac{P_\tau(\mathfrak{a},\mathbf{n},N)}{P_\tau(\mathfrak{a},\mathbf{0},N)}.
\end{equation}
Limit relation \eqref{eq.4.60} is similar to that given by \eqref{eq.3.30}. However,
unlike \eqref{eq.3.30}, both the numerator and denominator have the nonzero limits as $\tau\to\infty$, and hence, the proof of the existence of the limit on the right-hand side of \eqref{eq.4.60} does not require an effort.

Since both $P_\tau(\mathfrak{a}, \mathbf{n}, N)$ and $P_\tau(\mathfrak{a}, \mathbf{0}, N)$ are monotone decreasing as $N\to\infty$, we have
\begin{equation}\label{eq.4.4}
p(\mathfrak{a}, \mathbf{n}, \infty)=\lim_{N\to\infty}p(\mathfrak{a}, \mathbf{n}, N)=\lim_{N\to\infty}\lim_{\tau\to\infty}\frac{P_\tau(\mathfrak{a}, \mathbf{n}, N)}{P_\tau(\mathfrak{a}, \mathbf{0}, N)}.
\end{equation}

The last convergence is uniform. Indeed, let $N$ be large and $1<\|\mathbf{n}\|<N$. Then the system of equations for $P_\tau(\mathfrak{a}, \mathbf{n}, N)$ is the same as that given by \eqref{eq.3.1} -- \eqref{eq.3.13} for $P_\tau(\mathfrak{a}, \mathbf{n})$, while $P_\tau(\mathfrak{a}, \mathbf{0}, N)$ is found from the normalization condition and satisfies the inequality $P_\tau(\mathfrak{a}, \mathbf{0}, N)>P_\tau(\mathfrak{a}, \mathbf{0})$. Furthermore, for $N_2>N_1$ we have $P_\tau(\mathfrak{a}, \mathbf{0}, N_2)<P_\tau(\mathfrak{a}, \mathbf{0}, N_1)$. On the other
hand, $P_\tau(\mathfrak{a}, \mathbf{n}, N)$ is derived from the recursion that starts from $P_\tau(\mathfrak{a}, \mathbf{0}, N)$. Hence, in the calculation of $p(\mathfrak{a}, \mathbf{n}, \infty)$ the nonzero quantity $P_\tau(\mathfrak{a}, \mathbf{0}, N)$ that
is presented in the numerator and denominator reduces. So, for $\|\mathbf{n}\|<N_1$ we arrive at the exact equality $p(\mathfrak{a}, \mathbf{n}, N_1)=p(\mathfrak{a}, \mathbf{n}, N_2)$. This enables us to
conclude that the convergence on $N$ in \eqref{eq.4.4} is uniform. Hence the order
of limits in \eqref{eq.4.4} can be interchanged, and
$$
p(\mathfrak{a}, \mathbf{n})=p(\mathfrak{a}, \mathbf{n}, \infty),
$$
where $p(\mathfrak{a}, \mathbf{n})$ is defined by \eqref{eq.3.30}, while $p(\mathfrak{a}, \mathbf{n}, \infty)=\lim_{N\to\infty}p(\mathfrak{a}, \mathbf{n}, N)$.

Denoting
$$
p_j(\mathfrak{a},N)=\sum_{\|\mathbf{n}\|=j}p(\mathfrak{a},\mathbf{n},N), \ j=0,1,\ldots, N,
$$
one can see that $p_j(\mathfrak{a},N)$ is associated with the stationary distribution of $\|\mathbf{S}_t(\mathfrak{a},N)\|$ on the one hand, and with the stationary distribution of the birth-and-death process with specified birth and death rates denoted by $\lambda_j(\mathfrak{a},N)$ and $\mu_j(\mathfrak{a},N)$, in which $\lambda_{N+1}(\mathfrak{a},N)=0$, on the other hand. With $N=1,2,\ldots$ we obtain the sequence $\{p_j(\mathfrak{a},N), j=0,1,\ldots,N\}_{N\geq1}$, where
%
%

$$
\lim_{N\to\infty}p_j(\mathfrak{a},N)=\sum_{\|\mathbf{n}\|=j}p(\mathfrak{a}, \mathbf{n}),
$$
and $\lim_{N\to\infty}\lambda_j(\mathfrak{a},N)=\lambda_j(\mathfrak{a})$ and $\lim_{N\to\infty}\mu_j(\mathfrak{a},N)=\mu_j(\mathfrak{a})$.
The last limit relations imply that the random walk $\mathbf{S}_t(\mathfrak{a})$ is recurrent if and only if the birth-and-death process with birth and death rates $\lambda_j(\mathfrak{a})$ and $\mu_j(\mathfrak{a})$, $j=1,2,\ldots$ is recurrent.
\end{proof}

\noindent
\textit{Proof of Lemma} \ref{L3}. The proof of this lemma follows immediately from Lemma \ref{L4} by following application of the extended Bertrand-De Morgan test for convergence or divergence of positive series \cite{A3}.

\subsection{Proof of Theorem \ref{T1}} To prove the theorem, we are to derive the expression for
$$
\lambda_n(\mathfrak{a})=P_0(\mathfrak{a}; n+1|n),
$$
to find then the limit
$$
\ell(\mathfrak{a})=\lim_{n\to\infty}\left(\frac{\lambda_n(\mathfrak{a})}{1-\lambda_n(\mathfrak{a})}\right)^n=\lim_{n\to\infty}\left(\frac{\lambda_n(\mathfrak{a})}{\mu_n(\mathfrak{a})}\right)^n.
$$

Note, that the number of possible cases when the total number of customers in the network of two queueing systems is equal to $n$ is $n+1$.
The two of these cases correspond to the situation when the number of customers in one of the systems is $n$, and another system is empty. The rest $n-1$ cases correspond to the situation when each of the queues contains at least one customer. Assume that $n$ is large and even. (The assumption that $n$ is even is technical. For odd $n$ the arguments are similar.)
Then, the corresponding states of the system are presented as
\begin{equation}\label{eq.5.2}
(0,2k), (1,2k-1),\ldots,(k-1,k), (k,k).
\end{equation}

The relative frequencies of the states in the order of their appearance in \eqref{eq.5.2} are proportional to
\begin{equation}\label{eq.5.3}
\begin{aligned}
&\frac{1}{2}q[\mathfrak{a}, (0,2k)], q[\mathfrak{a}, (1,2k-1)], \ldots\\
&\ldots, q[\mathfrak{a},(k-1,k+1)], \frac{1}{2}q[\mathfrak{a}, (k,k)].
\end{aligned}
\end{equation}
The first and the last terms in \eqref{eq.5.3} enter with coefficient 1/2, since for the $q$-measures of their states we have the relationship $q[\mathfrak{a}, (0,2k)]=2p[\mathfrak{a}, (0,2k)]$ and $q[\mathfrak{a}, (k,k)]=2p[\mathfrak{a}, (k,k)]$, respectively, while for all other states $(k-l,k+l)$, $1\leq l\leq k-1$, we have $q[\mathfrak{a}, (k-l,k+l)]=p[\mathfrak{a}, (k-l,k+l)]$.

The first and last terms in \eqref{eq.5.3} are marginal terms, the other terms are expressed via recurrence relations \eqref{eq.4.50} given in Corollary \ref{C1}.
Let us set the relative frequency of state $(1,2k-1)$ in \eqref{eq.5.2} to a 2. Then, according to the estimate in Corollary \ref{C2}, the relative frequency of state $(0,2k)$ may be set to equal to $1+c/k$ with some constant $c$, and the relative frequency of state $(2,2k-2)$, according to asymptotic expansion \eqref{eq.4.50}, is \textit{approximately} equal to
$$
2\left(1-\frac{2\alpha^*_{2k-2}}{k}-\frac{4\alpha^*_{2k-3}}{k}-\frac{2\alpha^*_{2k-4}}{k}\right).
$$
Here and later the word \textit{approximately} means that the term $O(1/k^2)$ of the asymptotic expansion is ignored. Then, according to same expansion \eqref{eq.4.50}, the relative frequency of state $(3,2k-3)$ is approximately equal to
$$
2\left(1-\frac{2\alpha^*_{2k-2}}{k}-\frac{4\alpha^*_{2k-3}}{k}-\frac{2\alpha^*_{2k-4}}{k}\right)\left(1-\frac{2\alpha^*_{2k-4}}{k}-\frac{4\alpha^*_{2k-5}}{k}-\frac{2\alpha^*_{2k-6}}{k}\right).
$$
Then, the general formula for the approximate relative frequency of state $(l, 2k-l)$, $2\leq l\leq k-1$ is
$$
2\prod_{i=1}^{l-1}\left(1-\frac{2\alpha^*_{2k-2i}}{k}-\frac{4\alpha^*_{2k-2i-1}}{k}-\frac{2\alpha^*_{2k-2i-2}}{k}\right).
$$
So, the total approximate relative frequency of the states in \eqref{eq.5.2} is
\begin{equation}\label{eq.5.4}
\begin{aligned}
&\underbrace{\left(1+\frac{c}{k}\right)}_{\text{rel. frequency of} \ (0,2k)}+2\\
&+2\sum_{j=1}^{k-1}\prod_{i=1}^{j}\left(1-\frac{2\alpha^*_{2k-2i}}{k}-\frac{4\alpha^*_{2k-2i-1}}{k}-\frac{2\alpha^*_{2k-2i-2}}{k}\right)\\
&+\underbrace{\frac{q[\mathfrak{a}, (k,k)]}{q[\mathfrak{a}, (1,2k-1)]}}_{\text{rel. frequency of} \ (k,k)}.
\end{aligned}
\end{equation}
Here in \eqref{eq.5.4} the sum of the first two and last term
$$
\left(1+\frac{c}{k}\right)+2+\frac{q[\mathfrak{a}, (k,k)]}{q[\mathfrak{a}, (1,2k-1)]}
$$
is an exact value, while the sum of the rest terms
\begin{equation}\label{eq.5.1}
2\sum_{j=1}^{k-1}\prod_{i=1}^{j}\left(1-\frac{2\alpha^*_{2k-2i}}{k}-\frac{4\alpha^*_{2k-2i-1}}{k}-\frac{2\alpha^*_{2k-2i-2}}{k}\right)
\end{equation}
has the accuracy $O(1/k^2)$.

Note that following \eqref{eq.1.4} -- \eqref{eq.1.7} and \eqref{eq.1.8} for all $t>1$, we obtain
\begin{equation}\label{eq.1.15}
\begin{aligned}
&\mathsf{P}\{\|\mathbf{S}_t(\mathfrak{a})\|=n+1~\big|~\|\mathbf{S}_{t-1}(\mathfrak{a})\|=n, \mathcal{R}_{t-1}\}\\
&=\mathsf{P}\{\|\mathbf{S}_t(\mathfrak{a})\|=n-1~\big|~\|\mathbf{S}_{t-1}(\mathfrak{a})\|=n, \mathcal{R}_{t-1}\}=\frac{1}{2},
\end{aligned}
\end{equation}
where $\mathcal{R}_{t}=\{I_{t}>0\}$. (The notation for $I_t$  is introduced in Section \ref{S1}.)

Then the terms $P_n(\mathfrak{a})$ and $Q_n(\mathfrak{a})$  for large $n$ are evaluated as follows. If both of queues are not empty, then according to \eqref{eq.1.15}, the probability of incrementing of the total number of customers is equal to the probability of its decrementing. Specifically, the possible number of incrementing or decrementing given $\mathcal{R}_{t-1}$, which are equal because of the symmetry, are measured by
\begin{equation}\label{eq.5.5}
\begin{aligned}
&2\left[1+\sum_{j=1}^{k-1}\prod_{i=1}^{j}\left(1-\frac{2\alpha^*_{2k-2i}}{k}-\frac{4\alpha^*_{2k-2i-1}}{k}-\frac{2\alpha^*_{2k-2i-2}}{k}\right)\right]\\
&+\frac{q[\mathfrak{a}, (k,k)]}{q[\mathfrak{a}, (1,2k-1)]}.
\end{aligned}
\end{equation}
Here and later, the word \textit{measured} means that we exclude the asymptotically small term $O(1/k^2)$
Expression \eqref{eq.5.5} is obtained by eliminating the term for relative frequency of $(0,2k)$.

If one of the queues is empty, then incrementing  occurs in the case of arrival of a customer in one of two queues, while decrementing occurs at the moment of a service completion in the queue containing a customer in service. In this case, the number of additionally possible incrementing is measured by
\begin{equation}\label{eq.5.6}
\left(1+\frac{1}{2}\right)\left(1+\frac{c}{k}\right)=\frac{3}{2}\left(1+\frac{c}{k}\right),
\end{equation}
while the number of additionally possible decrementing is measured by
\begin{equation}\label{eq.5.7}
\frac{1}{2}\left(1+\frac{c}{k}\right).
\end{equation}
Recall that an arrival rate when the system is empty is 2. This explains the presence of the coefficient (1+1/2) rather than 1 in the first brackets on the left-hand side of \eqref{eq.5.6}, while for the number of additionally possible decrementing the corresponding coefficient is 1/2. This makes the proportion between the numbers of additionally possible incrementing to additionally possible decrementing given by \eqref{eq.5.6} and \eqref{eq.5.7}, respectively, to equal to 3 (that is greater than 1), which is important for the following application of Lemma \ref{L3}.

So, based on these facts, for $\lambda_n(\mathfrak{a})=\lambda_{2k}(\mathfrak{a})$ we have the following presentation:
$$
\lambda_{2k}(\mathfrak{a})=\frac{A_k}{B_k},
$$
where
$$
\begin{aligned}
A_k=&\frac{3}{2}\left(1+\frac{c}{k}\right)+2\\
&+2\sum_{j=1}^{k-1}\prod_{i=1}^{j}\left(1-\frac{2\alpha^*_{2k-2i}}{k}-\frac{4\alpha^*_{2k-2i-1}}{k}-\frac{2\alpha^*_{2k-2i-2}}{k}\right)\\
&+\frac{q[\mathfrak{a}, (k,k)]}{q[\mathfrak{a}, (1,2k-1)]},
\end{aligned}
$$
and
$$
\begin{aligned}
B_k=&2\left(1+\frac{c}{k}\right)+4\\
&+4\sum_{j=1}^{k-1}\prod_{i=1}^{j}\left(1-\frac{2\alpha^*_{2k-2i}}{k}-\frac{4\alpha^*_{2k-2i-1}}{k}-\frac{2\alpha^*_{2k-2i-2}}{k}\right)\\
&+2\frac{q[\mathfrak{a}, (k,k)]}{q[\mathfrak{a}, (1,2k-1)]}.
\end{aligned}
$$
So,
\begin{equation}\label{eq.5.8}
\ell(\mathfrak{a})=\lim_{k\to\infty}\left(\frac{P_{2k}(\mathfrak{a})}{1-P_{2k}(\mathfrak{a})}\right)^{2k}=\lim_{k\to\infty}\left(\frac{A_k}{B_k-A_k}\right)^{2k}.
\end{equation}
From the definition of $\kappa(a)$ given in \eqref{eq.1.22} for large $k$, we obtain
$$
\sum_{j=1}^{k-1}\prod_{i=1}^{j}\left(1-\frac{2\alpha^*_{2k-2i}}{k}-\frac{4\alpha^*_{2k-2i-1}}{k}-\frac{2\alpha^*_{2k-2i-2}}{k}\right)\asymp \frac{k}{\kappa(\mathfrak{a})}.
$$
Substituting this expansion into \eqref{eq.5.8}, we obtain
\begin{equation}\label{eq.5.9}
\begin{aligned}
\ell(\mathfrak{a})&=\lim_{k\to\infty}\left(\frac{1+\frac{q[\mathfrak{a}, (k,k)]}{q[\mathfrak{a}, (1,2k-1)]}+\frac{3}{2}+\frac{3c}{2k}+\frac{2k}{\kappa(\mathfrak{a})}}{1+\frac{q[\mathfrak{a}, (k,k)]}{q[\mathfrak{a}, (1,2k-1)]}+\frac{1}{2}+\frac{c}{2k}+\frac{2k}{\kappa(\mathfrak{a})}}\right)^{2k}\\
&=\lim_{k\to\infty}\left(\frac{\frac{\kappa(\mathfrak{a})}{2k}+\frac{\kappa(\mathfrak{a})q[\mathfrak{a}, (k,k)]}{2q[\mathfrak{a}, (1,2k-1)]k}+\frac{3\kappa(\mathfrak{a})}{4k}+\frac{3c\kappa(\mathfrak{a})}{4k^2}+1}{\frac{\kappa(\mathfrak{a})}{2k}+\frac{\kappa(\mathfrak{a})q[\mathfrak{a}, (k,k)]}{2q[\mathfrak{a}, (1,2k-1)]k}+\frac{\kappa(\mathfrak{a})}{4k}+\frac{3c\kappa(\mathfrak{a})}{4k^2}+1}\right)^{2k}\\
&=\frac{\exp\left(\kappa(\mathfrak{a})+\kappa(\mathfrak{a})\lim_{k\to\infty}\frac{q[\mathfrak{a}, (k,k)]}{2q[\mathfrak{a}, (1,2k-1)]}+\frac{3\kappa(\mathfrak{a})}{2}\right)}{\exp\left(\kappa(\mathfrak{a})+\kappa(\mathfrak{a})\lim_{k\to\infty}\frac{q[\mathfrak{a}, (k,k)]}{2q[\mathfrak{a}, (1,2k-1)]}+\frac{\kappa(\mathfrak{a})}{2}\right)}\\
&=\mathrm{e}^{\kappa(\mathfrak{a})}.
\end{aligned}
\end{equation}

 It follows from Lemma \ref{L3} that if
$\kappa(a)\leq 1$, then the random walk is recurrent. Otherwise, it is transient. In the case of $\kappa(\mathfrak{a})=1$ the required expansion of $\lambda_n(\mathfrak{a})/\mu_n(\mathfrak{a})$ is
$$
\frac{\lambda_{2k}(\mathfrak{a})}{\mu_{2k}(\mathfrak{a})}=1+\frac{1}{2k}+O\left(\frac{1}{k^2}\right),
$$
where the remainder $O(1/k^2)$ in the right-hand side of the relation is associated with the terms $3c\kappa(\mathfrak{a})/(4k^2)$ and $c\kappa(\mathfrak{a})/(4k^2)$ in the numerator and denominator, respectively, of the fraction in the second line of \eqref{eq.5.9} and with the remainder of expansions in \eqref{eq.5.1} that is also $O(1/k^2)$.
The theorem is proved.

\section{Discussion and future research}\label{S5}
\subsection{Discussion} The main result of the paper is based on presentation of $\kappa(\mathfrak{a})$ in the form of \eqref{eq.1.22} or \eqref{eq.1.50}. If the sequence of products \eqref{eq.1.23} converges, then the basic term in this presentation is the aforementioned sequence of products.
When $k$ is large, we have
$$
\begin{aligned}
\kappa(\mathfrak{a})&=\prod_{i=1}^{k-1}\left(1+\frac{2\alpha^*_{2k-2i}}{k}+\frac{4\alpha^*_{2k-2i-1}}{k}+\frac{2\alpha^*_{2k-2i-2}}{k}\right)\\
&\approx\prod_{i=0}^{2k}\left(1+\frac{4\alpha^*_{2k-i}}{k}\right).
\end{aligned}
$$
The last presentation enables us to conclude that, as $k$ taken large enough, the same asymptotic result holds for all permutations of
$$
\alpha^*_0, \alpha^*_1, \ldots, \alpha^*_{2k}.
$$
That is, replacing $\alpha^*_m$ with $\alpha^*_{j_m}$ $(m=1,2,\ldots, 2k)$, where $j_1$, $j_2$,\ldots, $j_{2k}$ is the permutation of the indices 1, 2,\ldots, $2k$, we arrive at the same classification as that for the original random walk.

\subsection{Future research}
In the present paper we provided a complete study of a new family of random walks. The interesting extension of the present study seems to be in the case where the elements $\mathfrak{a}$ of the set $\mathcal{A}$ are infinite-dimensional \textit{random} vectors. That is, $\alpha_{\mathbf{n}}$ are random variables taking the values in bounded areas that are defined by condition \eqref{eq.1.20}. Then, assumption \eqref{eq.1.21} should be replaced by
$$
|\mathsf{E}\alpha_{\mathbf{n}+\mathbf{1}}-\mathsf{E}\alpha_{\mathbf{n}}|\leq\gamma|\mathsf{E}\alpha_{\mathbf{n}}-\mathsf{E}\alpha_{\mathbf{n}-\mathbf{1}}|, \quad 0<\gamma<1,
$$
or another similar assumption.

Another possible direction of the extension of the main result of the present paper is to study random walks beyond nearest-neighborhood case.

\section*{Disclosure of the potential conflict of interests}
The author declares that he has no conflict of interests.

%
%

\section*{Acknowledgement}
The author expresses his gratitude to all the people who made critical comments officially or privately.

\end{document}